
\documentclass[11pt]{article}
\usepackage{cite}
\usepackage{amssymb}
\usepackage{amsfonts}
\usepackage{mathrsfs}
\usepackage{graphicx}
\usepackage{amsmath}
\usepackage{color}
\usepackage{amsthm}
\usepackage{epstopdf}
\usepackage{booktabs}
\usepackage{pifont,bm}
\usepackage{tikz}
\usepackage{enumerate}
\usepackage{threeparttable}
\usepackage{diagbox}

%

\newtheorem{thm}{Theorem}[section]

\newtheorem{proposition}[thm]{Proposition}
\theoremstyle{definition}

\newtheorem{theorem}[thm]{Theorem}
\newtheorem{remark}[thm]{Remark}

\newtheorem{rhp}[thm]{Riemann-Hilbert problem}

\textwidth=15cm \textheight=22cm \topmargin 0in \voffset=-20pt
\parskip 3pt
\oddsidemargin=0.1in \evensidemargin=0.1in

\baselineskip 0.2in \setlength{\parindent}{4ex}

\makeatletter
\def\@biblabel#1{[#1]}
\makeatother

\makeatletter \@addtoreset{equation}{section}

\begin{document}

\begin{titlepage}
\title{\bf{Long time asymptotic analysis for a nonlocal Hirota equation via the Dbar steepest descent method
\footnote{
Corresponding authors.\protect\\
\hspace*{3ex} E-mail addresses: ychen@sei.ecnu.edu.cn (Y. Chen)}
}}
\author{Jin-yan Zhu$^{a}$, Yong Chen$^{a,b,*}$\\
\small \emph{$^{a}$School of Mathematical Sciences, Shanghai Key Laboratory of PMMP} \\
\small \emph{East China Normal University, Shanghai, 200241, China} \\
\small \emph{$^{b}$College of Mathematics and Systems Science, Shandong University }\\
\small \emph{of Science and Technology, Qingdao, 266590, China} \\
\date{}}
\thispagestyle{empty}
\end{titlepage}
\maketitle

\vspace{-0.5cm}
\begin{center}
\rule{15cm}{1pt}\vspace{0.3cm}

\parbox{15cm}{\small
{\bf Abstract}\\
\hspace{0.5cm} In this paper, we mainly focus on the Cauchy problem of an integrable nonlocal Hirota equation with initial value in weighted  Sobolev space. Through the spectral analysis of Lax pairs, we successfully transform the Cauchy problem of the nonlocal Hirota equation into a solvable Riemann-Hilbert problem. Furthermore, in the absence of discrete spectrum, the long-time asymptotic behavior of the solution for the nonlocal Hirota equation is obtained through the Dbar steepest descent method. Different from the local Hirota equation, the leading order term on the continuous spectrum and residual error term of $q(x,t)$ are affected by the function $Im\nu(z_j)$.
}

\vspace{0.5cm}
\parbox{15cm}{\small{

\vspace{0.3cm} \emph{Key words:} Nonlocal Hirota equation; Riemann-Hilbert problem; Long-time asymptotic; Dbar steepest descent method.\\

\emph{PACS numbers:}  02.30.Ik, 05.45.Yv, 04.20.Jb. } }
\end{center}
\tableofcontents
\vspace{0.3cm} \rule{15cm}{1pt} \vspace{0.2cm}

\section{Introduction}
In recent years, many scholars have done a lot of research on integrable equations from various aspects\cite{KN-1978-JMP,VAE-1978-ZETF,VAK-1989-PD,YZ-2001-PLA,ZC-2021-JMP,PC-2022-JMP}. Later, researchers found that more extensive nonlocal equations also have many applications in optical lattices \cite{SN-2012-PRA}, optics\cite{AMJ-2013-PRL}, and quantum mechanics \cite{CM-2002-PRL}. Among which the most famous nonlocal nonlinear Schr\"{o}dinger(NNLS) equation
\begin{equation}\label{nls}
i u_{t}(x, t)-u_{x x}(x, t)-2\sigma Q(x,t) u(x, t)=0,~~\sigma=\pm1,~Q(x,t)=u(x, t) u^{*}(-x, t)
\end{equation}
was first discovered by Ablowitz and Musslimani in 2013\cite{AMJ-2013-PRL}. Symbol $\ast$ denotes complex conjugation. The soliton solution of the NNLS is studied by various methods, and its long-time asymptotic behavior is also investigated in different spaces with initial values\cite{AMJ-2016-N,FBF-2018-N,WM-2021-ND,RY-2019-JMP,LGZ-2021-AR}. In fact, Eq. (\ref{nls}) is $P T$ symmetric and satisfies the potential $ Q(x,t)=Q^{*}(-x,t)$. In this case, the soliton structure has a stable state under certain conditions, and the propagation process is almost lossless. Moreover, $PT$ symmetry is an important reason for the appearance of nonlocal models of the real spectrum of non Hamiltonian operators\cite{BCM-1998-PRL}, so it keeps the system unchanged under parity and time reversal transformations: $P:x\rightarrow-x,T:t\rightarrow-t$ and $i\rightarrow-i$\cite{RCE-2010-NP}. These properties are very important in soliton theory, so a large class of nonlocal equations are proposed and studied\cite{GM-2019-CNS,MWX-2020-JGP,ZZX-2018-CNS,ZH-2021-ND}.

The nonlocal Hirota equation(also call reverse space nonlocal Hirota equation)
\begin{equation}\label{nhe}
i u_{t}+\alpha\left[u_{x x}-2 \kappa {u}^{*}(-x,t) u^{2}\right]+i\beta\left[u_{x x x}-6 \kappa u {u}^{*}(-x,t) u_{x}\right]=0,~\kappa=\pm1,\alpha,\beta \in \mathbb{R}
\end{equation}
which is reduced by
$$
\begin{gathered}
u_{t}-i \alpha u_{x x}+2 i \alpha u^{2} v+\beta\left[u_{x x x}-6 u v u_{x}\right]=0, \\
v_{t}+i \alpha v_{x x}-2 i \alpha u v^{2}+\beta\left(v_{x x x}-6 u v v_{x}\right)=0
\end{gathered}
$$
 under condition $v=\kappa u^{*}(-x,t)$\cite{CJ-2019-JMP}. For this generalized coupled system of equations, there are many reduced forms. If $v=\kappa u^{*}(-x,-t)$, it is called reverse space-time nonlocal Hirota equation. If $v=\kappa u^{*}(x,-t)$ corresponds to reverse time nonlocal Hirota equation. These reduced equations are integrable, and its multi-soliton solutions are given in Ref.\cite{CJ-2019-JMP} by Hirota bilinear method. Soliton solutions of the reverse space nonlocal Hirota equation through Darboux transformation and  inverse scattering transform(IST) have also been learned\cite{LY-2022-CPAA,LNN-2021-ND}. The N-th Darboux transform was constructed and some types of exact solutions were obtained for the reverse space-time nonlocal Hirota equation\cite{XYR-2021-CPB}. In Ref.\cite{PWQ-2022-PD}, the Riemann-Hilbert method is used to study N-soliton solution of the reverse space-time nonlocal Hirota equation under non-zero boundary conditions. All of these give exact solutions without scattering, but the classical GLM equation or RH method can not solve them accurately with reflection. Therefore, it is necessary to understand the properties and structure of the solutions by studying the asymptotic behavior of the solutions of nonlinear partial differential equations.

The research on the long-time asymptotic behavior of the solutions for integrable equations can be traced back to 1974. Manakov et al. first proposed using the IST to solve the long-time asymptotic behavior of nonlinear wave equations \cite{M-1973-ZETF,AN-1973-JMP}. Later, the long-time asymptotic behavior of the solution for the fast decay initial value problem for a large number of nonlinear integrable systems was studied by using the IST\cite{ZM-1976-SPJ,Its-1981-DAN}. In 1993, Deift and Zhou, academicians of the American Academy of Sciences, invented a new nonlinear speed descent method based on the classical speed descent method, which is called Deift-Zhou speed descent method. Through a series of deformations of the original RH problem, it is approximated as a solvable model described by a parabolic cylindrical function. With the help of this special function, the expression of the long-time asymptotic behavior of the solution of the mKdV equation was strictly given\cite{DZ-1993-AM}. Once proposed, the method has been favored by scholars at home and abroad, and has been widely used in many integrable equations, such as focusing NLS equation, KdV equation, derivative nonlinear Schr\"{o}dinger equation, Degasperis-Procesi equation,Kundu-Eckhaus equation etc\cite{XD-1994-UT,KG-2009-MPAG,AD-2013-N,XFC-2013-MPAG,WGW-2019-JDE}.

Recently, McLaughlin and Miller further extended the Deift-Zhou speed descent method, and proposed the Dbar speed descent method with the help of the conclusion of the Dbar problem\cite{MM-2006-IMRN,MM-2008-IMRN}. This method has been successfully used to study the asymptotic stability of NLS multi-soliton solutions and the long-time approximation of KdV equation and NLS equation\cite{SR-2016-CMP,MR-2018-AN,P-2017-N}. The Dbar speed descent method is different from the Deift-Zhou speed descent method in analyzing the asymptotic properties of orthogonal polynomials in singular integrals on the jump line. Instead, the discontinuous part on the jump line is transformed into the form of Dbar problem. At the same time, the estimation of Cauchy integral operator in $L^{P}$ space is avoided, and the discontinuous part of Riemann-Hilbert problem(RHP) is rewritten as Dbar problem, which can be solved by integral equation. Because of its advantages, it is widely used in integrable equations, and has made great progress\cite{RJ-2018-CMP,YTL-2021-ar,CF-2022-JDE,YF-2022-AM,ZJY-2022-ar}.

In this paper, we mainly investigate the long-time asymptotic behavior of the solution for Eq.(\ref{nhe}) with weighted Sobolev initial value
 $$
u(x, 0) \in H^{1,1}(\mathbb{R}),~~H^{1,1}(\mathbb{R})=\{f \in L^{2}(\mathbb{R}):\left(1+|\cdot|^{2}\right)^{\frac{1}{2}} f\cap f^{\prime} \in L^{2}(\mathbb{R})\}
$$
by the Dbar speed descent method. It is worth mentioning that in this paper, we prove that when the initial value belongs to $H^{1,1}(\mathbb{R})$, the scattering coefficients $r(z),\tilde{r}(z)$ persist in $H^{1,1}(\mathbb{R})$ with the passage of time. And here we consider the case that there are no singular points on the continuous spectrum. Compared with the classical Hirota equation\cite{YTL-2021-ar}, the leading order term on the continuous spectrum and residual error term of the long-time asymptotic behavior of the solution for the nonlocal Hirota equation  are affected by the function $Im\nu(z_j)$.

The lay out of the paper is as follows. In section 2, we analyze the spectrum of the Eq. (\ref{nhe}) based on Lax pair. The analyticity, symmetry and asymptotic behavior of characteristic function and scattering matrix are studied. We also prove that when the initial value belongs to weighted Sobelev space, $r(z),\tilde{r}(z)$ belong to $H^{1,1}(\mathbb{R})$. In section 3, we establish associated the RHP of the nonlocal  Hirota equation(\ref{nhe}). In section 4,  by introducing $\delta(z)$ function, we change $M(x,z)$ into $M^{(1)}(x,z)$ and get a new RHP about $M^{(1)}(x,z)$, which allows the jump matrix near the phase points can be decomposed into two kinds of triangles. In section 5, The mixed $\bar{\partial}$-RHP is obtained for the nonlocal Hirota equation by defining $\mathcal{R}^{(1)}$. In section 6, we decompose the mixed $\bar{\partial}$-RHP into the model RHP of $M_{rhp}(z)$ and the pure $\bar{\partial}$ problem of $ M^{(3)}(z)$. Finally, the long-time asymptotic behavior of the nonlocal Hirota equation is obtained in section 7.

\section{The spectral analysis}

Eq.(\ref{nhe}) has Lax pair is
\begin{equation}\label{lax}
\begin{array}{l}
\Psi_{x}=U\Psi=\left(\begin{array}{cc}
-i z & u(x,t)\\
\kappa u^{*}(-x,t) & i z
\end{array}\right) \Psi, \\
 \Psi_{t}={V} \Psi=\left(\begin{array}{cc}
A &B\\
C & -A
\end{array}\right) \Psi,
\end{array}
\end{equation}
where
$$
\begin{aligned}
&A=-i \alpha\kappa  u u^{*}(-x,t)-2 i \alpha z^{2}+\beta\left(\kappa u^{*}(-x,t)  u_{x}+\kappa u u^{*}_{x}(-x,t)-4 i z^{3}-2 i z \kappa u u^{*}(-x,t)\right), \\
&B=i \alpha u_{x}+2 \alpha z u+\beta\left(2 \kappa u^{2} u^{*}(-x,t)-u_{x x}+2 i z u_{x}+4 z^{2} u\right), \\
&C=i \alpha\kappa u^{*}_{x}(-x,t)+2 \alpha\kappa z u^{*}(-x,t)+\beta\left(2 \kappa u u^{*2}(-x,t)-\kappa u^{*}_{x x}(-x,t)+2 i z\kappa u^{*}_{x}(-x,t)+4 z^{2}\kappa u^{*}(-x,t)\right).
\end{aligned}
$$
The spectral parameter $z\in \mathbb{C}$, under the rapidly decaying initial condition
$$
u(x,0)=u_0(x),~~~\lim _{|x| \rightarrow \infty}u_0(x)\rightarrow0.
$$
The Lax pair (\ref{lax}) admits the Jost solution $\Psi$:
$$
\Psi\thicksim e^{-izx\sigma_3-(2i\alpha z^2+4i\beta z^3)t\sigma_3},~~~|x|\rightarrow\infty
$$
where
$$
\sigma_{3}=\left[\begin{array}{rr}
1 & 0 \\
0 & -1
\end{array}\right].
$$
Let make the following change
\begin{equation}
\Phi=\Psi e^{izx\sigma_3+(2i\alpha z^2+4i\beta z^3)t\sigma_3},\label{pp}
\end{equation}
the modified Lax pair can be obtained
\begin{equation}
\begin{aligned}\label{ut}
&\Phi_{x}+i z\left[\sigma_{3}, \Phi\right] =Q \Phi, \\
&\Phi_{t}+(2i\alpha z^2+4i\beta z^3)\left[\sigma_{3}, \Phi\right] =V_1 \Phi,
\end{aligned}
\end{equation}
where
$$
Q=\left[\begin{array}{rr}
0 & u(x,t)\\
\kappa  u^{*}(-x,t) & 0
\end{array}\right],
$$
$$
V_1=-(i\alpha+2i\beta z) (Q^2\sigma_{3}-Q_x\sigma_{3})+\beta[Q_x,Q]\sigma_{3}+(2\alpha z+4\beta z^2) Q+2\beta Q^3-\beta Q_{xx},
$$
which can be written in full derivative form
$$
d(e^{i\left(zx+2\alpha z^2t+4\beta z^3t\right)\widehat{\sigma}_{3}} \Psi(x, t, k))=e^{i\left(zx+2\alpha z^2t+4\beta z^3t\right)\widehat{\sigma}_{3}}[Qdx+V_1dt)\Phi].
$$
The asymptotic solution $\Psi(x,t,k)$ satisfies
$$
\Phi^{\pm}(x, t, k) \rightarrow I, \quad x \rightarrow \pm \infty,
$$
the modified solution $\Phi^{\pm}(x, t, k)$ satisfy the following integral equations
\begin{equation}
\begin{aligned}
&\Phi^{-}(x, t, k)=I+\int_{- \infty}^{x} \mathrm{e}^{-i z(x-y) \sigma_{3}}Q(y, t) \Phi^{-}(y, t, k) \mathrm{e}^{i z(x-y) \sigma_{3}} \mathrm{~d} y,\\
&\Phi^{+}(x, t, k)=I-\int_{x}^{\infty} \mathrm{e}^{-i z(x-y) \sigma_{3}}Q(y, t) \Phi^{+}(y, t, k) \mathrm{e}^{i z(x-y) \sigma_{3}} \mathrm{~d} y.\label{jost}
\end{aligned}
\end{equation}

Dividing $\Phi^{\pm}$ into columns as $\Phi^{\pm}=\left(\Phi^{\pm}_{1}, \Phi^{\pm}_{2}\right)$, due to the structure of the potential $Q$, and Volterra integral equation (\ref{jost}), we have
\begin{proposition}\label{pr1}
For $u_0(x) \in H^{1,1}(\mathbb{R})$ , there exist unique eigenfunctions $\Phi^{\pm}$ which satisfy Eq.(\ref{jost}), respectively, and have the following properties:

$\bullet$  $\text{det} \Phi^{\pm}(x,t,z)=1,  x, t,z \in \mathbb{R}$;

$\bullet$  $ \Phi^{\pm}(x,t,z)\rightarrow I,  z\rightarrow\infty$;

$\bullet$ $\Phi^{-}_{1},\Phi^{+}_{2}$ are analytic in $\mathbb{C}_{+}$ and continuous in $\mathbb{C}_{+}\cup \mathbb{R}$;

 $\bullet$ $\Phi^{+}_{1},\Phi^{-}_{2}$ are analytic in $\mathbb{C}_{-}$, and continuous in $\mathbb{C}_{-}\cup \mathbb{R}$.

\end{proposition}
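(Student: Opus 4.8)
The plan is to build everything from the two Volterra integral equations (\ref{jost}) by the standard Neumann (Picard) iteration, and then read off each of the four bullets from the iterates. First I would write $\Phi^{\pm}=\sum_{n\ge 0}\Phi^{\pm}_{(n)}$ with $\Phi^{\pm}_{(0)}=I$ and $\Phi^{\pm}_{(n+1)}$ obtained by applying the integral operator in (\ref{jost}) to $\Phi^{\pm}_{(n)}$. Because the kernel is of Volterra type (the domain of integration is a half-line) and the conjugation by $e^{\mp iz(x-y)\sigma_3}$ leaves the diagonal entries unmodulated while multiplying the off-diagonal entries by $e^{\mp 2iz(x-y)}$, whose modulus is $\le 1$ for the relevant sign of $\mathrm{Im}\,z$, the $n$-th iterate obeys a factorial bound of the form $\tfrac{1}{n!}\big(\int|Q(y,t)|\,dy\big)^{n}$. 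The hypothesis $u_0\in H^{1,1}(\mathbb{R})$ gives $u(\cdot,t),\,u^{*}(-\cdot,t)\in L^{1}(\mathbb{R})$, hence $\int|Q|\,dy<\infty$, so the series converges absolutely and uniformly on $x$-half-lines; this yields existence and uniqueness of $\Phi^{\pm}$.

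For the unit determinant, rather than invoke Abel's formula on the modified system (whose right-hand side contains the right-multiplication term $iz\Phi\sigma_3$), I would use Jacobi's formula $\frac{d}{dx}\det\Phi^{\pm}=\det\Phi^{\pm}\cdot\mathrm{tr}\big((\Phi^{\pm})^{-1}\Phi^{\pm}_x\big)$. Substituting $\Phi^{\pm}_x=-iz[\sigma_3,\Phi^{\pm}]+Q\Phi^{\pm}$ and using $\mathrm{tr}(\sigma_3)=0=\mathrm{tr}(Q)$ together with the cyclicity of the trace gives $\frac{d}{dx}\det\Phi^{\pm}\equiv 0$, so $\det\Phi^{\pm}$ is independent of $x$; evaluating at $x\to\pm\infty$, where $\Phi^{\pm}\to I$, forces $\det\Phi^{\pm}\equiv 1$ for real $x,t,z$.

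The analyticity statements are where the sign bookkeeping matters. Separating (\ref{jost}) into columns, the equation for $\Phi^{-}_1$ carries the factor $e^{2iz(x-y)}$ on its lower entry with $y<x$, and the equation for $\Phi^{+}_2$ carries $e^{-2iz(x-y)}$ on its upper entry with $y>x$; in both cases the exponent has negative real part precisely when $\mathrm{Im}\,z>0$, so each iterate is analytic in $\mathbb{C}_+$ and the Neumann series converges uniformly on compact subsets of $\mathbb{C}_+$. The limit is therefore analytic in $\mathbb{C}_+$ and, by the uniform $L^1$-domination, continuous up to $\mathbb{R}$; this proves the third bullet, and the mirror-image sign analysis for $\Phi^{+}_1,\Phi^{-}_2$ (exponents with negative real part when $\mathrm{Im}\,z<0$) proves the fourth. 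The behavior as $z\to\infty$ then follows term by term: the off-diagonal iterates are oscillatory integrals against the $L^1$ (indeed $W^{1,1}$, since $u_0\in H^{1,1}$) potential and decay by the Riemann--Lebesgue lemma, forcing the off-diagonal part of $\Phi^{\pm}$ to $0$ and the diagonal part to $1$, i.e.\ $\Phi^{\pm}\to I$.

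I expect the main obstacle to be the uniform-on-compacts convergence needed to transfer analyticity from the iterates to the sum: one must show that the factorial bound on $\Phi^{\pm}_{(n)}$ holds with a constant that stays bounded as $z$ ranges over a compact subset of the closed half-plane, which requires controlling the off-diagonal exponential factors simultaneously in $x$ and $z$ and tying them to $\|Q(\cdot,t)\|_{L^1}$. Once that estimate is in place, Weierstrass's theorem on locally uniform limits of analytic functions gives analyticity and a dominated-convergence argument gives continuity up to the boundary. The nonlocality (the appearance of $u^{*}(-x,t)$ in place of $u^{*}(x,t)$) enters only through $\|Q(\cdot,t)\|_{L^1}$ and does not affect any of these estimates.
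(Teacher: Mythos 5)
Your proposal is correct and takes essentially the same route as the paper: the paper states Proposition \ref{pr1} as a direct consequence of the Volterra structure of Eq.~(\ref{jost}), and the Neumann-series machinery you describe (iteration with the factorial bound $\frac{1}{n!}\bigl(\int|u|\,dy\bigr)^{n}$, locally uniform convergence, and the half-plane sign analysis of the factors $e^{\pm 2iz(x-y)}$ column by column) is exactly what the paper itself deploys via the operator $\Gamma$ in its proof of Theorem \ref{rh}. Your Jacobi/Abel trace argument for $\det\Phi^{\pm}\equiv 1$ and the Riemann--Lebesgue step for $\Phi^{\pm}\to I$ are standard fill-ins for bullets the paper asserts without detail, and they are sound.
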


\begin{figure}
\center
\begin{tikzpicture}\usetikzlibrary{arrows}
\coordinate [label=0: Im $z$] ()at (0,2.8);
\coordinate [label=0: Re $z$] ()at (3,-0.2);
\coordinate [label=0:] ()at (2,0.1);
\coordinate [label=0:] ()at (-2.6,0.1);
\path [fill=pink] (-3.5,2.5)--(3.5,2.5) to
(3.5,0)--(-3.5,0);
\coordinate [label=0: $\mathbb{C}_{+}$] ()at (1,1);
\coordinate [label=0: $\mathbb{C}_{-}$] ()at (1,-1);
\draw[->, ] (3,0)--(3.5,0);
\draw[->,] (0,2)--(0,2.5);
\draw[thin](0,2)--(0,-2.3);
\draw[thin](3.5,0)--(-3.5,0);
\end{tikzpicture}
\caption{\small Definition of the $\mathbb{C}_{+}=\{z \mid \operatorname{Im} z>0\},~\mathbb{C}_{-}=\{z \mid \operatorname{Im} z<0\}$ }\label{1t}
\end{figure}

The partition of $\mathbb{C}_{\pm}$ is shown in Fig.\ref{1t}. Since $\Psi^{+}(x,t,z)$ and $\Psi^{-}(x,t,z)$ are solved by (\ref{lax}), the two solutions of an equation must be linearly related, and there is a constant matrix $S(z)$ independent of $x$,
\begin{equation}\label{ps}
\Psi^{-}(x,t,z)=\Psi^{+}(x,t,z)S(z),~~~z\in \mathbb{R},
\end{equation}
$S(z)$ is commonly referred to as the scattering matrix. Using Cramer's rule, we get the coefficient of the scattering matrix from (\ref{pp}) and (\ref{ps}) as
\begin{equation}\label{s1122}
\begin{array}{ll}
s_{11}(z)=\operatorname{det}\left(\Psi_{1}^{-}, \Psi_{2}^{+}\right), & s_{12}(z)=\operatorname{det}\left(\Psi_{2}^{-}, \Psi_{2}^{+}\right),\\
s_{21}(z)=\operatorname{det}\left(\Psi_{1}^{+}, \Psi_{1}^{-}\right), & s_{22}(z)=\operatorname{det}\left(\Psi_{1}^{+}, \Psi_{2}^{-}\right).
\end{array}
\end{equation}
On the basis of the relation (\ref{pp}) and properties of $\Phi^{\pm}(x,t,z)$ in Proposition (\ref{pr1}), we known that
\begin{proposition}\label{pr2}
The scattering matrix $S(z)$ and scattering data $s_{11}(z),s_{22}(z)$ satisfies:

$\bullet$  $\text{det} S(z)=1, \quad x, t \in \mathbb{R}$;

$\bullet$  $ S(z)\rightarrow I, \quad z\rightarrow\infty$;

$\bullet$  $s_{11}(z)$ is analytic in $\mathbb{C}_{+}$ and continuous in $\mathbb{C}_{+}\cup \mathbb{R} ;$

$\bullet$  $s_{22}(z)$ is analytic in $\mathbb{C}_{-}$ and continuous in $\mathbb{C}_{-}\cup \mathbb{R} ;$
\end{proposition}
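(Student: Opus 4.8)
The plan is to read off each property of the scattering matrix $S(z)$ from the corresponding property of the Jost eigenfunctions in Proposition \ref{pr1}, using the transformation (\ref{pp}) to cancel the oscillatory exponentials inside the Wronskian representations (\ref{s1122}). Throughout I would write $\theta(x,t,z)=izx+(2i\alpha z^{2}+4i\beta z^{3})t$, so that (\ref{pp}) reads $\Psi=\Phi e^{-\theta\sigma_{3}}$ and hence, column by column, $\Psi^{\pm}_{1}=\Phi^{\pm}_{1}e^{-\theta}$ and $\Psi^{\pm}_{2}=\Phi^{\pm}_{2}e^{\theta}$.

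First I would establish $\det S=1$. Since $\sigma_{3}$ is traceless, $\det e^{-\theta\sigma_{3}}=e^{-\theta\,\mathrm{tr}\,\sigma_{3}}=1$, so $\det\Psi^{\pm}=\det\Phi^{\pm}=1$ by the first bullet of Proposition \ref{pr1}. Taking determinants in the scattering relation (\ref{ps}) then gives $1=\det\Psi^{-}=\det\Psi^{+}\det S=\det S$.

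Next, for the analyticity and continuity of $s_{11}$ and $s_{22}$, I would substitute the column factorisations into (\ref{s1122}). For $s_{11}$ the two exponential factors are reciprocal, $e^{-\theta}e^{\theta}=1$, so $s_{11}(z)=\det(\Psi^{-}_{1},\Psi^{+}_{2})=\det(\Phi^{-}_{1},\Phi^{+}_{2})$, and likewise $s_{22}(z)=\det(\Phi^{+}_{1},\Phi^{-}_{2})$. By the third bullet of Proposition \ref{pr1}, both $\Phi^{-}_{1}$ and $\Phi^{+}_{2}$ are analytic in $\mathbb{C}_{+}$ and continuous on $\mathbb{C}_{+}\cup\mathbb{R}$; a $2\times2$ determinant is a bilinear, hence analyticity- and continuity-preserving, function of its columns, so $s_{11}$ inherits these properties on $\mathbb{C}_{+}\cup\mathbb{R}$. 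The fourth bullet gives the same for $s_{22}$ on $\mathbb{C}_{-}\cup\mathbb{R}$.

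Finally, for $S(z)\to I$ as $z\to\infty$, the key observation is that each $s_{ij}$ in (\ref{s1122}) is a Wronskian of two solutions of the traceless system (\ref{lax}), hence independent of $x$ and $t$; I may therefore evaluate at $x=t=0$, where $\theta\equiv0$ and all four coefficients reduce to determinants of $\Phi$-columns, namely $s_{11}=\det(\Phi^{-}_{1},\Phi^{+}_{2})$, $s_{12}=\det(\Phi^{-}_{2},\Phi^{+}_{2})$, $s_{21}=\det(\Phi^{+}_{1},\Phi^{-}_{1})$ and $s_{22}=\det(\Phi^{+}_{1},\Phi^{-}_{2})$, all at $(0,0,z)$. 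Letting $z\to\infty$ and using the second bullet of Proposition \ref{pr1}, $\Phi^{\pm}_{1}\to(1,0)^{T}$ and $\Phi^{\pm}_{2}\to(0,1)^{T}$, whence $s_{11},s_{22}\to1$ and $s_{12},s_{21}\to0$; that is, $S\to I$. I expect this last point—controlling the off-diagonal entries $s_{12},s_{21}$, which away from $x=t=0$ carry the genuinely oscillatory factors $e^{\pm2\theta}$—to be the only delicate step, and the Wronskian $x,t$-independence is exactly what removes the difficulty.
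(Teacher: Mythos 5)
Your proposal is correct and follows essentially the same route the paper intends: Proposition \ref{pr2} is stated there without a written proof, as an immediate consequence of the transformation (\ref{pp}), the Wronskian representations (\ref{s1122}), and Proposition \ref{pr1}, which is exactly what you carry out (cancellation of the unimodular exponentials in $s_{11},s_{22}$, tracelessness of $U$ and $V$ giving $x,t$-independence, and the $z\to\infty$ asymptotics of $\Phi^{\pm}$). Your write-up is a faithful and correctly detailed elaboration of that argument, including the one genuinely delicate point (the off-diagonal entries), which you handle soundly.
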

By the way, we can get
$$
\begin{array}{ll}
s_{11}(z) \sim 1+\mathcal{O}\left(z^{-1}\right), & s_{12}(z) \sim \mathcal{O}\left(z^{-1}\right), \\
s_{22}(z) \sim 1+\mathcal{O}\left(z^{-1}\right), & s_{21}(z) \sim \mathcal{O}\left(z^{-1}\right).
\end{array}
$$
Next, it is very important to study the symmetry of characteristic function $\Phi(x,t,z)$ and scattering matrix $S(z)$. In essence, the symmetry of $U$ and $V$ are guided according to the symmetry of $\Phi(x,t,z)$ and $S(z)$, which is given by the following proposition:
\begin{proposition}
The $U(x, t, z)$ and $V(x, t, z)$ in the Lax pair (\ref{lax}) meet the following reduction conditions on z-plane:
$$
U(x, t, z)=-\sigma_{0} U\left(-x,t,-z^{*}\right)^{*} \sigma^{-1}_{0}, \quad V(x, t, z)=-\sigma_{0} V\left(-x,t,-z^{*}\right)^{*} \sigma^{-1}_{0},
$$
which lead to the symmetry for the $\Phi(x,t,z)$ and $S(z)$:
\begin{equation}
\Phi^{\pm}(x, t, z)=\sigma_{0} \Phi^{\mp}\left(-x,t,-z^{*}\right)^{*} \sigma^{-1}_{0}, \quad S(z)=\sigma_{0} S^{*}\left(-z^{*}\right)^{-1} \sigma^{-1}_{0}
\end{equation}
where
$$
\sigma_{0}=\left[\begin{array}{rr}
0 & -\kappa \\
1 & 0
\end{array}\right].
$$
\end{proposition}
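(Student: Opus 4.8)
The plan is to argue in two stages: first verify the two matrix identities for $U$ and $V$ by direct substitution, and then push the resulting symmetry onto the eigenfunctions $\Phi^{\pm}$ and the scattering matrix $S(z)$ by a uniqueness argument. Two elementary facts will be used throughout: since $\kappa^{2}=1$ one has $\sigma_{0}^{-1}=\left(\begin{smallmatrix}0&1\\-\kappa&0\end{smallmatrix}\right)$, and conjugation by the antidiagonal $\sigma_{0}$ flips the sign of $\sigma_{3}$, i.e. $\sigma_{0}\sigma_{3}\sigma_{0}^{-1}=-\sigma_{3}$ and hence $\sigma_{0}e^{c\sigma_{3}}\sigma_{0}^{-1}=e^{-c\sigma_{3}}$ for every scalar $c$. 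The first useful by-product of the $U$-reduction is the potential symmetry $Q(x,t)=-\sigma_{0}Q(-x,t)^{*}\sigma_{0}^{-1}$ (equivalently $\sigma_{0}Q(-x,t)^{*}\sigma_{0}^{-1}=-Q(x,t)$), which is the engine behind everything that follows.

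Verifying the $U$-reduction is immediate: inserting $z\mapsto-z^{*}$, $x\mapsto-x$ into $U=-iz\sigma_{3}+Q$, conjugating (so $i\mapsto-i$, $(z^{*})^{*}=z$, while $\alpha,\beta,\kappa$ stay real), and conjugating by $\sigma_{0}$, the diagonal piece returns $-iz\sigma_{3}$ through $\sigma_{0}\sigma_{3}\sigma_{0}^{-1}=-\sigma_{3}$ and the off-diagonal piece returns $Q(x,t)$ by the potential symmetry above. \emph{The analogous check for $V$ is where I expect the real difficulty to sit.} Here $V$ is a cubic polynomial in $z$ whose matrix coefficients are assembled from $Q$, $Q_{x}$, $Q_{xx}$ and products thereof; under the composite operation ``$z\mapsto-z^{*}$ then complex conjugation'' even powers of $z$ are preserved while odd powers reverse sign, so the quadratic dispersive term ($\alpha z^{2}$) and the cubic one ($\beta z^{3}$) do not transform alike. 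One must therefore track, monomial by monomial, whether the external minus sign together with the flip $\sigma_{0}\sigma_{3}\sigma_{0}^{-1}=-\sigma_{3}$ restores $V$ exactly; the third-order dispersive contributions proportional to $\beta$ are the delicate ones and would receive my closest scrutiny.

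For the eigenfunctions I would bypass the normalisation exponentials altogether and work straight from the Volterra equations~(\ref{jost}). Setting $\widetilde{\Phi}(x,t,z):=\sigma_{0}\,\Phi^{-}(-x,t,-z^{*})^{*}\,\sigma_{0}^{-1}$, I would write the integral equation obeyed by $\Phi^{-}$ at the arguments $(-x,t,-z^{*})$, conjugate it, conjugate by $\sigma_{0}$, and substitute $y\mapsto-y$ in the integral. The substitution turns the limits $(-\infty,-x)$ into $(x,+\infty)$, the two kernel factors $e^{\mp iz(x-y)\sigma_{3}}$ are reproduced by $\sigma_{0}\sigma_{3}\sigma_{0}^{-1}=-\sigma_{3}$, and the potential term becomes $\sigma_{0}Q(-y,t)^{*}\sigma_{0}^{-1}=-Q(y,t)$; the minus sign from the potential symmetry cancels against the reversal of the limits and reproduces \emph{exactly} the integral equation defining $\Phi^{+}$. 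Uniqueness of the Volterra solution (Proposition~\ref{pr1}) then forces $\Phi^{+}(x,t,z)=\sigma_{0}\Phi^{-}(-x,t,-z^{*})^{*}\sigma_{0}^{-1}$, and the companion relation follows by exchanging the roles of $\pm$. This is the asserted identity $\Phi^{\pm}=\sigma_{0}\Phi^{\mp}(-x,t,-z^{*})^{*}\sigma_{0}^{-1}$, the $\pm\!\leftrightarrow\!\mp$ interchange being precisely the footprint of $x\mapsto-x$ reversing the domain of integration. Note that this route uses only the $x$-part of the Lax pair, so the eigenfunction symmetry is secured independently of the more delicate $V$-computation.

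The scattering symmetry I would extract from the column definitions~(\ref{s1122}). Since $\det\Psi^{\pm}=1$, each $s_{ij}$ is a Wronskian of one column of $\Psi^{-}$ and one of $\Psi^{+}$; for the diagonal data $s_{11}=\det(\Psi_{1}^{-},\Psi_{2}^{+})$ and $s_{22}=\det(\Psi_{1}^{+},\Psi_{2}^{-})$ the two attached diagonal exponentials are mutually reciprocal and cancel, so $s_{11},s_{22}$ may be written purely through the modified columns $\Phi^{\pm}$, whereas $s_{12},s_{21}$ retain a phase $e^{\pm2i(zx+2\alpha z^{2}t+4\beta z^{3}t)}$. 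Feeding in the eigenfunction symmetry and using $\det(\sigma_{0}A\sigma_{0}^{-1})=\det A$ together with $\overline{\det A}=\det(A^{*})$, the diagonal relations $s_{11}(z)=\overline{s_{11}(-z^{*})}$ and $s_{22}(z)=\overline{s_{22}(-z^{*})}$ fall out at once, while the off-diagonal pair yields $s_{12}(z)=\kappa\,\overline{s_{21}(-z^{*})}$ and $s_{21}(z)=\kappa\,\overline{s_{12}(-z^{*})}$ after the phase factors are matched; collecting the four identities is exactly the matrix statement $S(z)=\sigma_{0}S^{*}(-z^{*})^{-1}\sigma_{0}^{-1}$, the inverse reflecting that the involution interchanges the columns drawn from $\Psi^{+}$ and $\Psi^{-}$. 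I would treat the phase bookkeeping for $s_{12},s_{21}$ as the one place in this last step demanding care, but the genuinely laborious—and potentially subtle—part of the whole argument remains the term-by-term confirmation of the $V$-reduction, and in particular its third-order dispersive contribution.
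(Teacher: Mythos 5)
Your Volterra--uniqueness argument for the eigenfunctions and your Wronskian assembly for $S(z)$ are essentially right (the paper itself states this proposition without proof, so those verifications are the real content): the derivation of $\Phi^{\pm}(x,t,z)=\sigma_{0}\Phi^{\mp}(-x,t,-z^{*})^{*}\sigma_{0}^{-1}$ from (\ref{jost}) using only the potential symmetry $\sigma_{0}Q(-x,t)^{*}\sigma_{0}^{-1}=-Q(x,t)$ is correct, as is the equivalence of the four scalar relations (\ref{s12z}) with the matrix form $S(z)=\sigma_{0}S^{*}(-z^{*})^{-1}\sigma_{0}^{-1}$ via $\det S=1$. The genuine gap is precisely the step you postpone: the $V$-reduction is never verified, and it cannot be closed by the monomial bookkeeping you describe. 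Since conjugation by $\sigma_{0}$ swaps the diagonal entries, the stated identity $V(x,t,z)=-\sigma_{0}V(-x,t,-z^{*})^{*}\sigma_{0}^{-1}$ reduces on the diagonal to $A(x,t,z)=A(-x,t,-z^{*})^{*}$; but under $z\mapsto-z^{*}$ followed by conjugation the pure dispersion terms give $-2i\alpha z^{2}\mapsto+2i\alpha z^{2}$ while $-4i\beta z^{3}\mapsto-4i\beta z^{3}$, so the $\beta z^{3}$ term passes and the $\alpha z^{2}$ term flips sign --- a mismatch already present in the $z$-polynomial part that no bookkeeping of the potential terms can repair. The even/odd asymmetry you yourself noticed (``the quadratic dispersive term and the cubic one do not transform alike'') is thus not a point demanding ``closest scrutiny'' but an actual obstruction: one finds that $-\sigma_{0}V(-x,t,-z^{*})^{*}\sigma_{0}^{-1}$ reproduces the $\beta$-part of $V$ and negates the $\alpha$-part. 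Worse, the external minus sign is not even the condition the symmetry propagation needs: since $t$ is not reflected, pushing $\Theta(x,t,z):=\sigma_{0}\Psi(-x,t,-z^{*})^{*}\sigma_{0}^{-1}$ through $\Psi_{t}=V\Psi$ produces no chain-rule sign and requires $V(x,t,z)=+\sigma_{0}V(-x,t,-z^{*})^{*}\sigma_{0}^{-1}$, which in turn fails on the $\beta$-part. So your plan, carried out, would not confirm the first display of the proposition as printed; a complete argument must confront this sign structure (e.g.\ treat the $\alpha$- and $\beta$-flows separately and reconcile conventions), and your proposal leaves that undone.

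The same $\beta$-obstruction resurfaces in the ``phase bookkeeping'' you wave at in the last step. With $\theta_{0}=zx+(2\alpha z^{2}+4\beta z^{3})t$ one computes $\overline{\theta_{0}(-x,t,-z^{*})}=zx+2\alpha z^{2}t-4\beta z^{3}t$, so the phases attached to the off-diagonal Wronskians in (\ref{s1122}) do \emph{not} cancel for $t\neq0$: feeding your (correct) eigenfunction symmetry into the Wronskians yields $s_{21}(z)=\kappa\,e^{-16i\beta z^{3}t}\,s_{12}^{*}(-z^{*})$, which reduces to the claimed relation only at $t=0$ (or for $\beta=0$). The clean repair is to run the Wronskian computation at $t=0$ and observe that this suffices, time evolution of the data being handled separately as the paper does in Theorem \ref{rh}; asserting that the phase factors ``are matched'' at general $t$ is false as written. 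In summary: your $U$-check, eigenfunction argument, and $t=0$ scattering assembly stand, but the $V$-identity --- the one step you defer --- and the $t\neq0$ phase cancellation are genuine gaps, and the first would in fact fail term-by-term if executed as planned.
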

Further, the symmetry of the scattering coefficient can be written as
\begin{equation}\label{s12z}
s_{11}(z)=s_{11}^{*}(-z^{*}), ~~~s_{12}(z)=\kappa s_{21}^{*}(-z^{*}),~~s_{22}(z)=s_{22}^{*}(-z^{*}).
\end{equation}
Next, we define the reflection coefficient
\begin{equation}\label{rz}
r(z)=\frac{s_{21}(z)}{s_{11}(z)},~~~\tilde{r}(z)=\frac{s_{12}(z)}{s_{22}(z)},
\end{equation}
looking back at Proposition (\ref{pr2}), we know that $det S=1$, then
$$
1-r(z)\tilde{r}(z)=\frac{1}{s_{11}(z)s_{22}(z)}.
$$
Based on the above analysis, we can get the following theorem about $r(k)$.
\begin{thm}\label{rh}
For any given initial value $u\in H^{1,1}(\mathbb{R})$, we have $r(k)$ and $\tilde{r}(z)$ belong to $H^{1,1}(\mathbb{R})$.
\end{thm}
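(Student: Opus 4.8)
The plan is to prove that $r(z)=s_{21}(z)/s_{11}(z)$ lies in $H^{1,1}(\mathbb{R})$ and then to deduce the same for $\tilde r$ directly from the reduction symmetry \eqref{s12z}. On the real axis, \eqref{s12z} gives $s_{12}(z)=\kappa\,s_{21}^{*}(-z)$ and $s_{22}(z)=s_{22}^{*}(-z)$, and the involution $f(z)\mapsto f^{*}(-z)$ is an isometry of $H^{1,1}(\mathbb{R})$, so it suffices to control $s_{21}$ and $s_{11}$ (the bounds for $s_{12},s_{22}$, hence for $\tilde r$, then follow). The first step is therefore to record, from the Volterra equations \eqref{jost} at $t=0$, the integral representations
\[
s_{11}(z)=1+\int_{\mathbb{R}}u_{0}(y)\,\Phi^{-}_{21}(y,z)\,dy,\qquad
s_{21}(z)=\kappa\int_{\mathbb{R}}e^{-2izy}\,u_{0}^{*}(-y)\,\Phi^{-}_{11}(y,z)\,dy,
\]
together with the companion identities $\Phi^{-}_{11}=1+\int_{-\infty}^{x}u_{0}\,\Phi^{-}_{21}\,dy$ and $\Phi^{-}_{21}=\kappa\int_{-\infty}^{x}e^{2iz(x-y)}u_{0}^{*}(-y)\,\Phi^{-}_{11}\,dy$, which exhibit $\Phi^{-}_{21}$ as first order and $\Phi^{-}_{11}-1$ as second order in the potential.

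The second step isolates the linear (Fourier) part of $s_{21}$. Writing $\Phi^{-}_{11}=1+(\Phi^{-}_{11}-1)$ splits
\[
s_{21}(z)=\kappa\,\widehat{u_{0}^{*}(-\cdot)}(2z)+\kappa\int_{\mathbb{R}}e^{-2izy}\,u_{0}^{*}(-y)\,\bigl(\Phi^{-}_{11}(y,z)-1\bigr)\,dy .
\]
For the leading term I would use that the (rescaled) Fourier transform maps $H^{1,1}(\mathbb{R})$ isomorphically onto itself: since $u_{0}\in H^{1,1}(\mathbb{R})$ forces $y\mapsto u_{0}^{*}(-y)\in H^{1,1}(\mathbb{R})$, its transform $\widehat{u_{0}^{*}(-\cdot)}(2\,\cdot\,)$ belongs to $H^{1,1}(\mathbb{R})$. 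Here the weighted bound $\langle z\rangle\,r\in L^{2}$ is paid for by $u_{0}'\in L^{2}$ (moving the factor $z$ onto $\partial_{y}$ by integration by parts), while the derivative $\partial_{z}r\in L^{2}$ is paid for by $y\,u_{0}\in L^{2}$ — exactly the two extra ingredients packaged in $H^{1,1}$.

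The third step, which I expect to be the main obstacle, is to show that the quadratic remainder integral also lies in $H^{1,1}(\mathbb{R})$. This needs quantitative control of $\Phi^{-}-I$ and of $\partial_{z}\Phi^{-}$, uniformly in $z\in\mathbb{R}$, in terms of $\|u_{0}\|_{H^{1,1}}$. I would obtain these from the successive-approximation (Neumann series) solution of \eqref{jost}: the $L^{\infty}_{x}$-bound $\|\Phi^{-}-I\|\lesssim e^{c\|u_{0}\|_{L^{1}}}$ is uniform in $z$, whereas differentiating \eqref{jost} in $z$ brings a factor $(x-y)$ under the integral, so that $\partial_{z}\Phi^{-}$ is controlled only after paying a weight, i.e. through $\langle\cdot\rangle u_{0}\in L^{2}$. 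Feeding these bounds into the remainder and its $z$-derivative, and estimating the resulting bilinear and trilinear oscillatory integrals by Cauchy–Schwarz together with Plancherel and Young's inequality, one checks that the remainder, $\langle z\rangle$ times it, and its $z$-derivative all lie in $L^{2}$. Bookkeeping these multilinear estimates so that every piece lands in $L^{2}$ — in particular tracking where each of the derivative and the weight of $H^{1,1}(u_{0})$ is spent — is the technical heart of the argument.

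The final step assembles $r=s_{21}/s_{11}$. Under the standing hypothesis of no spectral singularities (and no discrete spectrum), $s_{11}$ is continuous on $\mathbb{R}$, tends to $1$ at infinity, and never vanishes, hence $|s_{11}(z)|\ge c>0$ uniformly; moreover $s_{11}-1\in H^{1}(\mathbb{R})\subset L^{\infty}$ with $s_{11}'\in L^{2}$, so $g:=1/s_{11}$ satisfies $\|g\|_{\infty}<\infty$ and $g'=-s_{11}'/s_{11}^{2}\in L^{2}$. Since $s_{21}\in H^{1,1}\subset L^{\infty}$, the elementary estimates
\[
\|r\|_{L^{2}}\le\|g\|_{\infty}\|s_{21}\|_{L^{2}},\quad
\|\langle z\rangle r\|_{L^{2}}\le\|g\|_{\infty}\|\langle z\rangle s_{21}\|_{L^{2}},\quad
\|r'\|_{L^{2}}\le\|g\|_{\infty}\|s_{21}'\|_{L^{2}}+\|g'\|_{L^{2}}\|s_{21}\|_{\infty}
\]
give $r\in H^{1,1}(\mathbb{R})$. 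The identical argument applied to $s_{12},s_{22}$ — or the symmetry reduction noted in the first step — yields $\tilde r\in H^{1,1}(\mathbb{R})$, completing the proof.
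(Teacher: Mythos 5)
Your proposal is correct in substance and rests on the same basic machinery as the paper --- the Volterra/Neumann-series bounds for $\Phi^{\pm}$ and $\partial_z\Phi^{\pm}$ coming from \eqref{psz}, Plancherel-type $L^2_z$ estimates in the spectral variable, the symmetry \eqref{s12z} to transfer everything to $\tilde r$, and division by a non-vanishing $s_{11}$ --- but it organizes the key step differently. The paper never isolates a linear term: it evaluates the Wronskian expressions for $s_{11}-1$, $s_{21}$ and their $z$-derivatives at $x=0$ and bounds each factor wholesale, estimating $\|\Phi^{-}_{21}\|_{L^2_z}$ and $\|\Phi^{-}_{21,z}\|_{L^2_z}$ by dualizing against test functions $\sigma\in C_0(\mathbb{R})$ and invoking the Fourier transform $\hat\sigma$. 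Your decomposition $s_{21}(z)=\kappa\,\widehat{u_0^{*}(-\cdot)}(2z)+(\text{quadratic remainder})$ makes the Fourier structure explicit at the outset and confines the multilinear work to the remainder; this is the organization used in the $L^2$-Sobolev scattering literature, and it buys something the paper's written proof actually lacks, namely the weighted half of the $H^{1,1}$ norm. The paper's estimates \eqref{sj1}--\eqref{s21} only yield $r, r_z\in L^2$ (and likewise for $\tilde r$), i.e. $H^1$-type control; the bound $\langle z\rangle r\in L^2$, which you obtain for the leading term by trading the factor $z$ for $u_0'\in L^2$ via integration by parts and propose to track through the remainder, is never established there. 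So your plan, if the bookkeeping in your third step is carried out, is the more complete argument on exactly the point where the statement requires full $H^{1,1}$ membership. Two minor caveats: your multilinear remainder estimates are asserted rather than executed, though they are standard and no less detailed than the paper's own treatment; and you handle only $t=0$, whereas the paper appends a short persistence-in-time remark via $\hat r(z)=r(z)e^{(4\alpha z^2+8\beta z^3)t_0}$ --- inessential to the statement as phrased about the initial value, and in any case the sketchiest part of the paper's proof.
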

\begin{proof}
Firstly, we consider the space part, that is, the initial case when $t = 0$, which can be known from the integral equation satisfied by the characteristic function
\begin{equation}\label{psz}
\begin{aligned}
&{\Phi^{-}_{1}(x,z)=\left(\begin{array}{l}
1 \\
0
\end{array}\right)+\int_{-\infty}^{x}\left(\begin{array}{c}
u(y)\Phi^{-}_{21}(y,z) \\
\kappa u^{*}(-y) e^{2 i z(x-y)}\Phi^{-}_{11}(y, z)
\end{array}\right) d y}, \\
&{\Phi_{2}^{-}(x, z)=\left(\begin{array}{c}
0 \\
1
\end{array}\right)+\int_{-\infty}^{x}\left(\begin{array}{c}
 u(y)e^{-2 i z(x-y)}\Phi^{-}_{22}(y, z) \\
\kappa u^{*}(-y)\Phi^{-}_{12}(y, z)
\end{array}\right) d y},\\
&{\Phi^{+}_{1}(x,z)=\left(\begin{array}{l}
1 \\
0
\end{array}\right)-\int^{\infty}_{x}\left(\begin{array}{c}
u(y)\Phi^{+}_{21}(x,z) \\
\kappa u^{*}(-y) e^{2 i z(x-y)}\Phi^{+}_{11}(y, z)
\end{array}\right) d y}, \\
&{\Phi_{2}^{+}(x, z)=\left(\begin{array}{c}
0 \\
1
\end{array}\right)-\int^{\infty}_{x}\left(\begin{array}{c}
 u(y)e^{-2 i z(x-y)}\Phi^{+}_{22}(y, z) \\
\kappa u^{*}(-y)\Phi^{+}_{12}(y, z)
\end{array}\right) d y}.
\end{aligned}
\end{equation}
Initially, we prove the boundedness of the characteristic function. Here, take $\Phi^{-}_{1}(x,z)$ as an example, others can be obtained similarly.
First, for $k \in \mathbb{R},f\in L^{\infty}(-\infty, 0]$, we define an operator mapping
$$
\Gamma[f](x)=\int_{-\infty}^{x}\left(\begin{array}{rr}
0 & u(y) \\
\kappa u^{*}(-y) e^{2 i z(x-y)} & 0
\end{array}\right)\left(\begin{array}{l}
f_1(y) \\
f_2(y)
\end{array}\right)dy,
$$
it can be seen from this that
$$
|\Gamma[f](x)| \leq \int_{-\infty}^{x}\left|u(y)\right| d y\|f\|_{\left(L^{\infty}(-\infty, 0]\right)},
$$
which means that $\Gamma$ is a bounded linear operator on $L^{\infty}(-\infty, 0]$. In addition, for $n\in \mathbb{N}$ through mathematical induction, it can be concluded that:
$$
\left|\Gamma^{n} f(x)\right| \leq \frac{1}{n !}\left(\int_{-\infty}^{x}\left|u(y)\right| d y\right)^{n}\|f\|_{\left(L^{\infty}(-\infty, 0]\right)}.
$$
Therefore, the characteristic function $\Phi_1^{-}(x,z)$ can be written in the following  series form
\begin{equation}
\Phi_1^{-}(x,z)=\sum_{n=0}^{\infty} \Gamma^{n}\left(\begin{array}{l}
1 \\
0
\end{array}\right),~~~~x\in(-\infty, 0]\label{p1}
\end{equation}
and it is uniformly convergent. Thus, it is deduced for $z\in \mathbb{R},x\in(-\infty, 0]$
$$
\left|\Phi^{-}_{1}(x, z)\right| \leq e^{\left\|u\right\|_{L^{1}(\mathbb{R})}}.
$$
Similarly, we can obtain
$$
\begin{aligned}
&\left|\Phi^{-}_{2}(x,z)\right| \leq e^{\left\|u\right\|_{L^{1}(\mathbb{R})}},\\
&\left|\Phi^{+}_{1}(x,z)\right| \leq e^{\left\|u\right\|_{L^{1}(\mathbb{R})}}, \\
&\left|\Phi^{+}_{2}(x,z)\right| \leq e^{\left\|u\right\|_{L^{1}(\mathbb{R})}}.
\end{aligned}
$$
In addition, according to the uniform convergence property of series (\ref{p1}), we have
$$
\begin{aligned}
\Phi^{-}_{1,z}(x, z) &=\sum_{n=0}^{\infty} \partial_{z}\Gamma^{n}\left(\begin{array}{l}
1 \\
0
\end{array}\right)=\sum_{n=1}^{\infty} \sum_{m=0}^{n} \Gamma^{m} \Gamma^{\prime} \Gamma^{n-m-1}\left(\begin{array}{l}
1 \\
0
\end{array}\right),
\end{aligned}
$$
where the form of $\Gamma^{\prime}$  is
$$
\Gamma^{\prime}[f](x)=\int_{-\infty}^{x}\left(\begin{array}{c}
u(y)f_2 \\
2 i\kappa (x-y) u^{*}(-y) e^{2 i z(x-y)}f_1
\end{array}\right) d y.
$$
It is also easy to get that the linear operator $\Gamma^{\prime}$  is bounded on $x\in(-\infty, 0]$
$$
\left\|\Gamma^{\prime}\right\|_{L^{\infty}(-\infty, 0]} \leq \left\|u\right\|_{L^{1,1}(\mathbb{R})},
$$
so there is
$$
\begin{aligned}
\Phi^{-}_{1,z}(x, z)& \leq \sum_{n=0}^{\infty}\left|\partial_{z}\Gamma^{n}\left(\begin{array}{l}
1 \\
0
\end{array}\right)\right|\leq \sum_{n=1}^{\infty} \frac{2\left\|u\right\|^{n-1}_{L^{1}(\mathbb{R})}}{(n-1) !} \left\|u\right\|_{L^{1,1}(\mathbb{R})} \\
& \leq 2 \left\|u\right\|_{L^{1,1}(\mathbb{R})} e^{\left\|u\right\|_{L^{1}(\mathbb{R})}}
\end{aligned}
$$
for  $z \in \mathbb{R}$  and $x\in(-\infty, 0]$.

In similar steps, we can also get
$$
\Phi^{-}_{2,z}(x, z)\leq  2 \left\|u\right\|_{L^{1,1}(\mathbb{R})} e^{\left\|u\right\|_{L^{1}(\mathbb{R})}},
$$
$$
\Phi^{+}_{1,z}(x, z)\leq 2 \left\|u\right\|_{L^{1,1}(\mathbb{R})} e^{\left\|u\right\|_{L^{1}(\mathbb{R})}},
$$
$$
\Phi^{+}_{2,z}(x, z)\leq 2 \left\|u\right\|_{L^{1,1}(\mathbb{R})} e^{\left\|u\right\|_{L^{1}(\mathbb{R})}}.
$$
On the basis of the expression of Eq.(\ref{rz}), $(\Phi_{ij}^{\pm})(x, z)(i,j=1,2)$ need to be estimated below. As can be seen from the expression of $\Phi_{1}^{-}(x,z)$ that
$$
\Phi^{-}_{21}(x, z)=\int_{-\infty}^{x}\kappa u^{*}(-y) e^{2 i z(x-y)}\Phi^{-}_{11}(y, z)dy,
$$
$\forall \sigma(z) \in C_{0}\left(\mathbb{R}\right)$, we compute
$$
\begin{aligned}
&\left|\int_{\mathbb{R}} \sigma(z) \int_{-\infty}^{x} \kappa u^{*}(-y)e^{2 i z(x-y)}\Phi_{11}^{-}(x, z) d y d z\right| \\
& \leq  \frac{1}{\sqrt{2}} e^{\left\|u\right\|_{L^{1}(\mathbb{R})}}\left\|u\right\|_{L^{2}(\mathbb{R})}\|\hat{\sigma}\|_{L^{2}(\mathbb{R})},
\end{aligned}
$$
where $\hat{\sigma}$ is the Fourier transform of $\sigma(z)$. This shows that
$$
\left\|\Phi_{21}^{-}(x, z)\right\|_{L_{z}^{2}(\mathbb{R})} \leq \frac{1}{\sqrt{2}}\left\|u\right\|_{L^{2}(\mathbb{R})} e^{\left\|u\right\|_{L^{1}(\mathbb{R})}}.
$$
Similarly, the same estimation can be obtained for $\Phi_{12}^{-}(x, z),\Phi_{21}^{+}(x, z)$ and $\Phi_{12}^{+}(x, z)$.
Further, the derivative expression of $\Phi_{21}^{-}(x, z)$ over $z$ is
$$
\Phi_{21,z}^{-}(x, z)=\int_{-\infty}^{x}\kappa u^{*}(-y) e^{2 i z(x-y)}\Phi^{-}_{11,z}(y, z)dy+\int_{-\infty}^{x}2 i (x-y)\kappa u^{*}(-y) e^{2 i z(x-y)}\Phi^{-}_{11}(y, z)dy,
$$
following the above steps, it is easy to get
$$
\begin{aligned}
\left\|\Phi^{-}_{21,z}(x, z)\right\| L_{z}^{2}(\mathbb{R}) \leq & \sqrt{2}\left\|u\right\|_{L^{1,1}(\mathbb{R})}\left\|q\right\|_{L^{2}(\mathbb{R})} e^{\left\|u\right\|_{L^{1}(\mathbb{R})}} +\sqrt{2}\left\|u\right\|_{L^{2, \frac{1}{2}(\mathbb{R})}} e^{\left\|u\right\|_{L^{1}(\mathbb{R})}}.
\end{aligned}
$$
For $\Phi^{-}_{12,z}(x, z),\Phi^{+}_{21,z}(x, z)$ and $\Phi^{+}_{12,z}(x, z)$ can be obtained similarly.

Eq.(\ref{rz}) implies that
$$
\begin{aligned}
&s_{11}(z)-1=(\Phi^{-}_{11}-1)(\Phi^{+}_{22}-1)+(\Phi^{-}_{11}-1)+(\Phi^{+}_{22}-1)-\Phi^{-}_{21}\Phi^{+}_{12},\\
&s_{11,z}(z)=\Phi^{-}_{11,z}\Phi^{+}_{22}+\Phi^{-}_{11}\Phi^{+}_{22,z}-\Phi^{-}_{21,z}\Phi^{+}_{12}-\Phi^{-}_{21}\Phi^{+}_{12,z}.\\
\end{aligned}
$$
It can be seen from integral Eq.(\ref{psz}) that the expression of $\Phi^{-}_{11}-1$ and $\Phi^{+}_{22}-1$ are
$$
\begin{aligned}
&\Phi^{-}_{11}-1=\int_{-\infty}^{x}u(y)\Phi_{21}^{-}dy,\\
&\Phi^{+}_{22}-1=\int_{\infty}^{x}\kappa u^{*}(y)\Phi_{12}^{+}dy.
\end{aligned}
$$
As we all know, the scattering data $s_{11}(z)$ is independent of $x$. Let $x = 0$ and then use the above estimation
\begin{equation}\label{sj1}
\left|s_{11}(z)-1\right| \leq\left\|u\right\|_{L^{1}(\mathbb{R})}^{2} e^{2\left\|u\right\|_{L^{1}(\mathbb{R})}}+2\left\|u\right\|_{L^{1}(\mathbb{R})} e^{\left\|u\right\|_{L^{1}(\mathbb{R})}}+e^{2\left\|u\right\|_{L^{1}(\mathbb{R})}},
\end{equation}
and
\begin{equation}\label{s11z}
\left|s_{11,z}(z)\right| \leq 8\left\|u\right\|_{L^{1,1}(\mathbb{R})} e^{2\left\|u\right\|_{L^{1}(\mathbb{R})}}.
\end{equation}
Eq.(\ref{rz}) also shows  that
$$
\begin{aligned}
&s_{21}(z)=e^{2izx}(\Phi^{-}_{12}\Phi^{+}_{22}-\Phi^{-}_{22}\Phi^{+}_{12}),\\
&s_{21,z}(z)=2ixe^{2izx}(\Phi^{-}_{12}\Phi^{+}_{22}-\Phi^{-}_{22}\Phi^{+}_{12})+e^{2izx}(\Phi^{-}_{12,z}\Phi^{+}_{22}
+\Phi^{-}_{12}\Phi^{+}_{22,z}-\Phi^{-}_{22,z}\Phi^{+}_{12}-\Phi^{-}_{22}\Phi^{+}_{12,z}).
\end{aligned}
$$
Similarly, $s_{12}(z)$ is independent of $x$, so it can also be processed at $x = 0$, using the above steps, the following estimates can be obtained
\begin{equation}\label{s21}
\begin{aligned}
&\|s_{21}\| _{L_{z}^{2}(\mathbb{R})}  \leq 2\left\|u\right\|_{L^{2}(\mathbb{R})} e^{2\left\|u\right\|_{L^{1}(\mathbb{R})}}, \\
&\left\|s_{21,z}\right\|_{L_{z}^{2}(\mathbb{R})} \leq 4 \left\|u\right\|_{L^{2}(\mathbb{R})}\left\|u\right\|_{L^{1,1}(\mathbb{R})} e^{2\left\|u\right\|_{L^{1}(\mathbb{R})}}
+2 \sqrt{2}e^{2\left\|u\right\|_{L^{1}(\mathbb{R})}}\left(\left\|u\right\|_{L^{1,1}(\mathbb{R})}\left\|u\right\|_{L^{2}(\mathbb{R})}+\left\|u\right\|_{L^{2, \frac{1}{2}}(\mathbb{R})}\right).
\end{aligned}
\end{equation}
In summary, based on Eqs. (\ref{sj1}), (\ref{s11z}) and (\ref{s21}),  we can get
$$
\begin{aligned}
&s_{11}(z) \in L^{2}(\mathbb{R}),~~~s_{21}(z) \in L^{2}(\mathbb{R}), \\
&s_{11,z}(z) \in L^{\infty}(\mathbb{R}), ~~s_{21,z}(z) \in L^{2}(\mathbb{R}).
\end{aligned}
$$
So there
$$
r(z) \in L^{2}(\mathbb{R}),~~~r_z(z) \in L^{2}(\mathbb{R}).
$$

For $\tilde{r}(z)$, according to symmetry (\ref{s12z}) and Eq.(\ref{rz}), we have
$$
\tilde{r}(z)=\frac{s_{12}(z)}{s_{22}(z)}=\frac{\kappa s^{*}_{21}(-z^{*})}{s_{22}(z)},~~~z\in \mathbb{R}
$$
and
$$
\tilde{r}_z(z)=\frac{\kappa s^{*}_{21,z}(-z^{*})s_{22}(z)-\kappa s^{*}_{21}(-z^{*})s_{22,z}(z)}{s^2_{22}(z)},~~~z\in \mathbb{R}
$$
Using Eq. (\ref{psz}), similar steps can be derived $s_{22}(z)\in L^{2}(\mathbb{R}),s_{22,z}(z)\in L^{2}(\mathbb{R})$, which lead to $\tilde{r}(z)\in L^{2}(\mathbb{R}), \tilde{r}_z(z)\in L^{2}(\mathbb{R})$.

For the initial value at any other time $t_0>0$, note that $\hat{\Psi}^{\pm}(x,t,z)$ is the Jost solution at the corresponding time
\begin{equation}\label{hp}
\hat{\Psi}^{\pm} \sim e^{ e^{-izx\sigma_3-(2i\alpha z^2+4i\beta z^3)(t-t_0)\sigma_3}}, \quad x \rightarrow \pm \infty.
\end{equation}
Referring to Eqs. (\ref{ps}), (\ref{s1122}) and (\ref{rz}), we can get
\begin{equation}\label{hr}
\hat{r}(z)=\frac{det(\hat{\Psi}^{+}_1,\hat{\Psi}^{-}_1)}{det(\hat{\Psi}^{-}_1,\hat{\Psi}^{+}_2)},
\end{equation}
due to the uniqueness of Jost solution, it can be obtained from Eqs. (\ref{pp}), (\ref{rz}), (\ref{hp}) and (\ref{hr}) that
$$
\hat{r}(z)=r(z) e^{(4\alpha z^2+8\beta z^3)t_{0}}.
$$
Through the above simple calculation, it can be seen that $r(z)$ continues to exist in $H^{1,1}(\mathbb{R})$, and the same conclusion is reached for $\tilde{r}(z)$.
\end{proof}

\section{The construction of a RHP}
Based on the above analysis of Lax pair, we define the following sectionally meromorphic matrices $M(x,t,z)$
\begin{equation}
M(x, t, z)=\left\{\begin{array}{ll}
\left(\frac{\Phi^{-}_{1}}{s_{11}(z)}, \Phi^{+}_{2}\right), & k \in \mathbb{C}_{+}, \\
\left(\Phi^{+}_{1}, \frac{\Phi^{-}_{2}}{s_{22}(z)}\right), & k \in \mathbb{C}_{-},
\end{array}\right.
\end{equation}
which meets the following RHP:

\begin{rhp}\label{r1}
Find a matrix-valued function $M(x,t,z)$ which satisfies:

$\bullet$  Analyticity: $M(x,t,z)$ is meromorphic in $\mathbb{C}\backslash\mathbb{R}$ ;

$\bullet$   Symmetry: $M(x,t,z)=\sigma_{0} M^{*}\left(-x,t,-z^{*}\right)\sigma_{0}^{-1};$

$\bullet$ Jump condition: $M(x,t,z)$ has continuous boundary values $M^{\pm}(x,t,z)$ on $\mathbb{R}$ and
\begin{equation}
M^{+}(x, t, z)=M^{-}(x, t, z) V(z), \quad z \in \mathbb{R},
\end{equation}
where
\begin{equation}
V(z)=\left(\begin{array}{cc}
1-\kappa r(z)\tilde{r}(z)& -\kappa\tilde{r}(z) e^{-2 i t \theta(z)} \\
r(z) e^{2 i t \theta(z)} & 1
\end{array}\right),~~~\theta=z\frac{x}{t}+2\alpha z^2+4\beta z^3;\label{vv}
\end{equation}

$\bullet$   Asymptotic behaviors:
$$
M(x, t, z)=I+\mathcal{O}\left(z^{-1}\right), \quad z \rightarrow \infty;
$$
\end{rhp}
\underline The solution of the nonlocal Hirota eqaution (\ref{nhe}) can be expressed by
\begin{equation}
u(x,t)= 2 i\lim _{z \rightarrow \infty} (z M)_{12}(x, t, z).
\end{equation}

\section{Conjugation}
Through the form of the jump matrix, it can be found that the long-term asymptotic of RHP (\ref{r1}) is affected by the growth and attenuation of the exponential function $e^{\pm2 i t \theta(k)}$. Therefore, it is necessary to deal with the oscillation term in the jump matrix in Eq. (\ref{vv}) and decompose the jump matrix according to the sign change diagram of $Re(i\theta)$  to ensure that any jump matrix is bounded in a given region. According to $\theta^{\prime}(z) = 0$ and $\theta^{\prime\prime}(z) \neq 0$, we can get two stationary phase points:
 $$
 z_1=\frac{-\alpha-\sqrt{\alpha^{2}-3 \beta \frac{x}{t}}}{6 \beta},~~~ z_2=\frac{-\alpha+\sqrt{\alpha^{2}-3 \beta \frac{x}{t}}}{6 \beta}
 $$
 where $\alpha^{2}-3 \beta \frac{x}{t}>0$,
 and the sign distribution of $ Re(i\theta)$ is obtained, which is shown in Fig. 2.
 \begin{figure}
\center
\begin{tikzpicture}\usetikzlibrary{arrows}
\coordinate [label=0: ] ()at (0,2);
\coordinate [label=0: ] ()at (0,4);
\coordinate [label=0: Re $z$] ()at (3,-0.2);
\coordinate [label=0:] ()at (2,0.1);
\coordinate [label=0:] ()at (-2.6,0.1);
\coordinate [label=0: $Re(i\theta)>0$] ()at (-0.82,1.2);
\path [fill=pink] (-4,3)--(-1.5,3) to
(-1.5,0)--(-4,0);
\path [fill=pink] (4,3)--(1.5,3) to
(1.5,0)--(4,0);
\path [fill=pink] (-1.5,0)--(1.5,0) to
(1.5,-3)--(-1.5,-3);
\coordinate [label=0: $Re(i\theta)<0$] ()at (-3.6,1.2);
\coordinate [label=0: $Re(i\theta)<0$] ()at (1.8,1.2);
\coordinate [label=0: $Re(i\theta)<0$] ()at (-0.82,-1);
\coordinate [label=0: $Re(i\theta)>0$] ()at (-3.6,-1);
\coordinate [label=0: $Re(i\theta)>0$] ()at (1.8,-1);
\coordinate [label=0: $\bullet$] ()at (-1.7,0);
\coordinate [label=0: $\bullet$] ()at (1.3,0);
\coordinate [label=0: $z_1$] ()at (-2,-0.2);
\coordinate [label=0: $z_2$] ()at (1.6,-0.2);
\draw[->, ] (3.5,0)--(4,0);
\draw[thin](4,0)--(-4,0);
\draw[thin](-1.5,3)--(-1.5,-3);
\draw[thin](1.5,3)--(1.5,-3);
\end{tikzpicture}\label{tt2}
\caption{\small Symbol distribution image of $Re(i\theta)$. }
\end{figure}
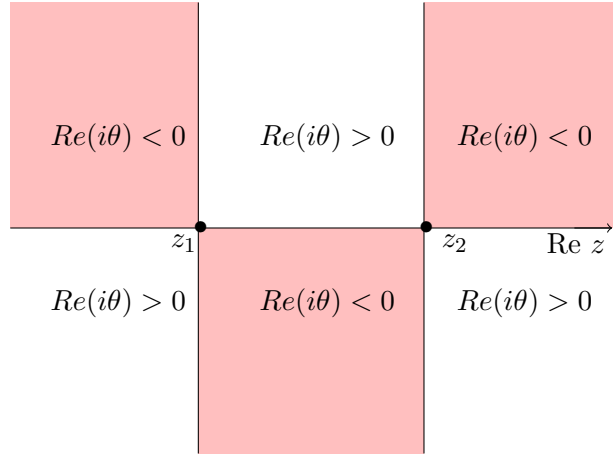

The jump matrix $V(z)$ in RHP(\ref{r1}) has the following decomposition
$$
V(z)=\left(\begin{array}{cc}
1 & -{\kappa\tilde{r}}(z) e^{-2 i t \theta} \\
0 & 1
\end{array}\right)\left(\begin{array}{cc}
1 & 0 \\
r(z) e^{2 i t \theta} & 1
\end{array}\right),~~~ z<z_1\cup z>z_2,
$$
and
$$
V(z)=\left(\begin{array}{cc}
1 & 0 \\
\frac{r(z)}{1-\kappa r(z)\tilde{r}(z)} e^{2 i t \theta} & 1
\end{array}\right)\left(\begin{array}{cc}
1-\kappa r(z)\tilde{r}(z) & 0 \\
0 & \frac{1}{1-\kappa r(z)\tilde{r}(z)}
\end{array}\right)\left(\begin{array}{cc}
1 & \frac{-\kappa \tilde{r}(z)}{1-\kappa r(z)\tilde{r}(z)} e^{-2 i t \theta} \\
0 & 1
\end{array}\right), z_1<z<z_2.
$$
The next goal is to eliminate the diagonal matrix of $z$ in the interval $(z_1,z_2)$ by introducing a RHP about $\delta(z)$
\begin{rhp}\label{brh}
Find a scalar function $\delta(z)$ which satisfies:

$\bullet$  Analyticity: $\delta(z)$ is analytic in  $\mathbb{C}\backslash(z_1,z_2)$;

$\bullet$  Jump condition: $
\delta_{+}(z)=\delta_{-}(z)\left(1-\kappa r(z)\tilde{r}(z)\right), \quad  z_1<z<z_2;$

$\bullet$  Asymptotic behaviors: $\delta(k)=1+\mathcal{O}\left(\frac{1}{z}\right)$ and $|\arg (z-z_j)| \leq c<\pi$
\begin{equation}\label{dz}
\delta(z)=1-\frac{i}{z}\int_{z_1}^{z_2} \frac{\log \left(1-\kappa r(s)\tilde{r}(s)\right)}{2\pi } d s+O\left(z^{-2}\right).~|z|\rightarrow\infty,
\end{equation}
\end{rhp}
Using the Plemelij formula, it is easy to write the unique solution of the above RHP \ref{brh} as
$$
\delta(z)=\exp \left[\frac{1}{2 \pi i} \int_{z_1}^{z_2} \frac{\log \left(1-\kappa r(s)\tilde{r}(s)\right)}{s-z} d s\right]=\exp \left[i \int_{z_1}^{z_2} \frac{\nu(s)}{s-z} d s\right].
$$
When $z \rightarrow z_1,z_2$ along any ray $L_{\phi}=\left\{z_{1}+ e^{i \phi}, z_{2}+e^{i \phi},-\pi<\phi<\pi\right\}$
 $$
\begin{aligned}
&\left|\delta(z)-\delta_{1}\left(z_1\right)\left(z-z_1\right)^{i \nu\left(z_1\right)}\right| \leq c\|r\|_{H^{1}(\mathbb{R})}\left|z-z_1\right|^{\frac{1}{2}-Im\nu(z_1) },\\
&\left|\delta(z)-\delta_{2}\left(z_2\right)\left(z-z_2\right)^{i \nu\left(z_2\right)}\right| \leq c\|r\|_{H^{1}(\mathbb{R})}\left|z-z_2\right|^{\frac{1}{2}-Im\nu(z_2) },
\end{aligned}
$$
where
$$
\begin{aligned}
&\delta_{1}\left(z_1\right)= e^{i \beta\left(z_1, z_1\right)},~~~\delta_{2}\left(z_2\right)= e^{i \beta\left(z_2, z_2\right)},\\
&\beta(z,z_1)=\left(i \int_{z_1}^{z_1+1} \frac{\nu\left(z_1\right)}{k-z} d k+i \int_{z_1}^{z_2} \frac{\nu(k)-\chi_1(k) \nu\left(z_1\right)}{k-z} d k\right),\\
&\beta(z,z_2)=\left(i \int_{z_2-1}^{z_2} \frac{\nu\left(z_2\right)}{k-z} d k+i \int_{z_1}^{z_2} \frac{\nu(k)-\chi_2(k) \nu\left(z_2\right)}{k-z} d k\right),\\
&\chi_{1}(z)=\left\{\begin{array}{lc}
1, & z_1<z<z_1+1, \\
0, &  { elsewhere },
\end{array} \quad \chi_{2}(z)=\left\{\begin{array}{lc}
1, & z_2-1<z<z_2 ,\\
0, & { elsewhere }.
\end{array}\right.\right.
\end{aligned}
$$
\begin{remark}
 Suppose that $|\arg (1-\kappa r(z)\tilde{r}(z))|<\pi$ and $|\operatorname{Im} \nu(z)|<\frac{1}{2}$ for $z\in \mathbb{R}$ in order to guarantee that $log(1-\kappa r(z)\tilde{r}(z))$ is single valued and that $\delta(z)$ is square integrable at the singularity $z_1$ and $z_2$.
\end{remark}
Then make the following changes in the interval $[z_1,z_2]$.
\begin{equation}
M^{(1)}(z)=M(z) \delta(z)^{-\sigma_{3}},
\end{equation}
Based on the above analysis, the RHP of $M^{(1)}(z)$ can be obtained as follows:
\begin{rhp}\label{4r}
Find a matrix $M^{(1)}(z)$ that satisfies:

$\bullet$ Analyticity: $M^{(1)}$ is analytic within $\mathbb{C}\backslash \mathbb{R};$

$\bullet$ Jump condition: $M^{(1)}_{+}( z)=M^{(1)}_{-}( z) V^{(1)}(z), \quad z \in \mathbb{R},$

where
$$
V^{(1)}(z)=\left\{\begin{array}{c}
\left(\begin{array}{cc}
1 & 0 \\
\frac{r(z)}{1-\kappa r(z)\tilde{r}(z)} \delta_{-}^{-2} e^{2 i t \theta(z)} & 1
\end{array}\right)\left(\begin{array}{cc}
1 & \frac{-\kappa \tilde{r}(z)}{1-\kappa r(z)\tilde{r}(z)} \delta_{+}^{2} e^{-2 i t \theta(z)} \\
0 & 1
\end{array}\right), \quad z_1<z<z_2, \\
\left(\begin{array}{cc}
1 & -\kappa \tilde{r}(z) \delta^{2} e^{-2 i t \theta(z)} \\
0 & 1
\end{array}\right)\left(\begin{array}{cc}
1 & 0 \\
r(z) \delta^{-2} e^{2 i t \theta(z)} & 1
\end{array}\right), \quad z>z_2\cup z<z_1;
\end{array}\right.
$$

$\bullet$  Normalization: $M^{(1)}(z)=I+\mathcal{O}\left(z^{-1}\right), \quad z \rightarrow \infty;$

\end{rhp}
In addition, due to $\delta(z)^{-\sigma3}\rightarrow I$, as $z\rightarrow\infty$, so the solution of the nonlocal Hirota equation (2.1) can be expressed as
\begin{equation}
u(x,t)= 2 i\lim _{z \rightarrow \infty} (z M^{(1)}(z))_{12}.
\end{equation}

\section{Contour deformation}
In this section, we mainly open the jump line at the  stationary phase points and deform $M^{(1)}(z)$ to get a $\bar{\partial}$-RHP. The new contour $\Sigma$ is changed to 8 routes composed of ${z_1+e^{i \phi} \mathbb{R}^{+}}\cup{z_{2}+e^{i \phi} \mathbb{R}^{+}},\phi=\{\frac{i\pi}{4}, \frac{3i\pi}{4},\frac{5i\pi}{4},\frac{7i\pi}{4}\}$, as shown in Fig. 3.
\begin{equation}
\Sigma=\Sigma_{1} \cup \Sigma_{2} \cup \Sigma_{3} \cup \Sigma_{4} \cup \Sigma_{5} \cup \Sigma_{6} \cup \Sigma_{7} \cup \Sigma_{8}.
\end{equation}
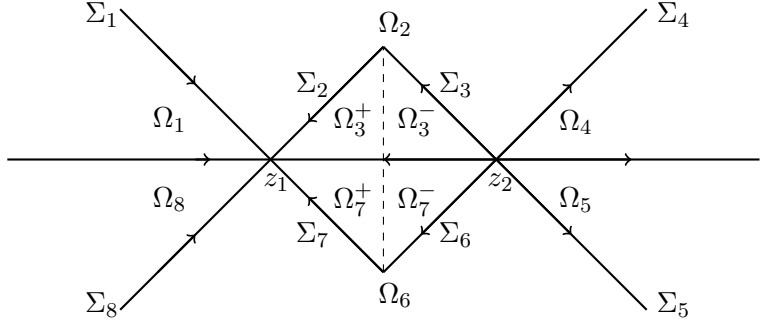
\begin{figure}\label{tt3}
\center
\begin{tikzpicture}\usetikzlibrary{arrows}
\coordinate [label=0:] ()at (2,0.1);
\coordinate [label=0:] ()at (-2.6,0.1);
\coordinate [label=0: $z_1$] ()at (-1.73,-0.3);
\coordinate [label=0: $z_2$] ()at (1.25,-0.3);
\coordinate [label=0: $\Sigma_1$] ()at (-4.1,1.93);
\coordinate [label=0: $\Sigma_2$] ()at (-1.3,1);
\coordinate [label=0: $\Sigma_3$] ()at (0.6,1);
\coordinate [label=0: $\Sigma_4$] ()at (3.5,1.93);
\coordinate [label=0: $\Sigma_8$] ()at (-4.1,-1.93);
\coordinate [label=0: $\Sigma_7$] ()at (-1.3,-1);
\coordinate [label=0: $\Sigma_6$] ()at (0.6,-1);
\coordinate [label=0: $\Sigma_5$] ()at (3.5,-1.93);
\coordinate [label=0: $\Omega_1$] ()at (-3.2,0.53);
\coordinate [label=0: $\Omega_2$] ()at (-0.2,1.8);
\coordinate [label=0: $\Omega_3^{+}$] ()at (-0.8,0.53);
\coordinate [label=0: $\Omega_3^{-}$] ()at (0.05,0.53);
\coordinate [label=0: $\Omega_4$] ()at (2.2,0.5);
\coordinate [label=0: $\Omega_5$] ()at (2.2,-0.53);
\coordinate [label=0: $\Omega_6$] ()at (-0.2,-1.8);
\coordinate [label=0: $\Omega_7^{+}$] ()at (-0.8,-0.53);
\coordinate [label=0: $\Omega_7^{-}$] ()at (0.05,-0.53);
\coordinate [label=0: $\Omega_8$] ()at (-3.2,-0.53);
\draw[thick](0,1.5)--(-3.5,-2);
\draw[thick](0,-1.5)--(-3.5,2);
\draw[thick](0,1.5)--(3.5,-2);
\draw[thick](0,-1.5)--(3.5,2);
\draw[thick](5,0)--(-5,0);
\draw[->,thick] (2.5,0)--(3.3,0);
\draw[->,thick] (2.5,0)--(0,0);
\draw[->,thick] (-2.5,0)--(-2.3,0);
\draw[->,thick] (1.5,0)--(2.5,1);
\draw[->,thick] (1.5,0)--(2.5,-1);
\draw[->,thick] (1.5,0)--(0.5,1);
\draw[->,thick] (1.5,0)--(0.5,-1);
\draw[->,thick] (0,-1.5)--(-1,-0.5);
\draw[->,thick] (0,1.5)--(-1,0.5);
\draw[->,thick] (-3,1.5)--(-2.5,1);
\draw[->,thick] (-3,-1.5)--(-2.5,-1);
\draw[dashed](0,-1.5)--(0,1.5);
\end{tikzpicture}
\caption{\small  The jump contour $\Sigma$ and domains $\Omega_j$. }
\end{figure}

There is a scalar function $R_j \rightarrow \mathbb{C}$ that satisfies the following boundary conditions:
\begin{equation}\label{R1}
R_{1}(z)= \begin{cases}r(z) \delta^{-2}(z), & z \in\left(-\infty,z_1 \right), \\ r\left(z_1\right) \delta_{1}^{-2}(z_1)\left(z-z_1\right)^{-2 i v\left(z_1\right)}, & z \in \Sigma_{1},\end{cases}
\end{equation}
\begin{equation}\label{R2}
R_{3^{+}}(z)= \begin{cases}\frac{-\kappa\tilde{r}(z) \delta_{+}^{2}(z)}{1-\kappa r(z)\tilde{r}(z)}, & z \in\left(z_1, z_2\right), \\ \frac{-\kappa \tilde{r}(z_1) \delta_{1}^{2}(z_1)}{1-\kappa r(z_1)\tilde{r}(z_1)}\left(z-z_1\right)^{2 i v\left(z_1\right)}, & z \in \Sigma_{2},\end{cases}
\end{equation}
\begin{equation}\label{R3}
R_{3^{-}}(z)= \begin{cases}\frac{-\kappa \tilde{r}(z) \delta_{+}^{2}(z)}{1-\kappa r(z)\tilde{r}(z)}, & z \in\left(z_1, z_2\right), \\ \frac{-\kappa \tilde{r}(z_2) \delta_{2}^{2}(z_2)}{1-\kappa r(z_2)\tilde{r}(z_2)}\left(z-z_2\right)^{2 i v\left(z_2\right)}, & z \in \Sigma_{3},\end{cases}
\end{equation}
\begin{equation}\label{R4}
R_{4}(z)= \begin{cases}r(z) \delta^{-2}(z), & z \in\left(z_2,\infty \right), \\ r\left(z_2\right) \delta_{2}^{-2}(z_2)\left(z-z_2\right)^{-2 i v\left(z_2\right)}, & z \in \Sigma_{4},\end{cases}
\end{equation}
\begin{equation}\label{R5}
R_{5}(z)= \begin{cases}-\kappa \tilde{r}(z) \delta^{2}(z), & z \in\left(z_2,\infty \right),\\ -\kappa \tilde{r}\left(z_2\right) \delta_2^{2}(z_2)\left(z-z_2\right)^{2 i v\left(z_2\right)}, & z \in \Sigma_{5},\end{cases}
\end{equation}
\begin{equation}\label{R6}
R_{7^{-}}(z)= \begin{cases}\frac{r(z) \delta_{-}^{-2}(z)}{1-\kappa r(z)\tilde{r}(z)}, & z \in\left(z_1, z_2\right), \\ \frac{r\left(z_2\right) \delta_2^{2}(z_2)}{1-\kappa r(z_2)\tilde{r}(z_2)}\left(z-z_2\right)^{-2 i v\left(z_2\right)}, & z \in \Sigma_{6},\end{cases}
\end{equation}
\begin{equation}\label{R7}
R_{7^{+}}(z)= \begin{cases}\frac{r(z) \delta_{-}^{-2}(z)}{1-\kappa r(z)\tilde{r}(z)}, & z \in\left(z_1, z_2\right), \\ \frac{r\left(z_1\right) \delta_1^{2}(z_1)}{1-\kappa r(z_1)\tilde{r}(z_1)}\left(z-z_1\right)^{-2 i v\left(z_1\right)}, & z \in \Sigma_{7},\end{cases}
\end{equation}
\begin{equation}\label{R8}
R_{8}(z)= \begin{cases}-\kappa \tilde{r}(z) \delta^{2}(z), & z \in\left(-\infty,z_1\right),\\ -\kappa\tilde{r}\left(z_1\right) \delta_1^{2}(z_1)\left(z-z_1\right)^{2 i v\left(z_1\right)}, & z \in \Sigma_{8},\end{cases}
\end{equation}
And meet the following estimates
\begin{equation}\label{rj}
\begin{aligned}
&\left|R_{j}\right| \lesssim c \sin ^{2}\left(\arg \left(z-z_1\right)\right)+\langle\operatorname{Re}(z)\rangle^{-\frac{1}{2}},~~j=1,3^{+},7^{+},8,\\
&\left|R_{j}\right| \lesssim c \sin ^{2}\left(\arg \left(z-z_2\right)\right)+\langle\operatorname{Re}(z)\rangle^{-\frac{1}{2}},~~j=3^{-},4,5,7^{-},\\
&\left|\bar{\partial} R_{j}(z)\right| \leq c\left(\left|p_j^{\prime}(\operatorname{Re} z)\right|+|z-z_1|^{-\frac{1}{2}-Im\nu(z_1)}\right),~~j=1,3^{+},7^{+},8,\\
&\left|\bar{\partial} R_{j}(z)\right| \leq c\left(\left|p_j^{\prime}(\operatorname{Re} z)\right|+|z-z_2|^{-\frac{1}{2}-Im\nu(z_2)}\right),~~j=3^{-},4,5,7^{-},\\
&p_1=p_4=r,~~p_{3^{+}}=p_{3^{-}}=\frac{-\kappa\tilde{r}}{1-\kappa r\tilde{r}},~~p_5=p_8=-\kappa \tilde{r},~~p_{7^{+}}=p_{7^{-}}=\frac{{r}}{1-\kappa r\tilde{r}}.
\end{aligned}
\end{equation}
On this basis, we can construct a matrix $\mathcal{R}^{(1)}(z)$:
$$
\mathcal{R}^{(1)}(z)= \begin{cases}\left(\begin{array}{cc}
1 & 0 \\
(-1)^{m_j}R_{j}(z) e^{2 i t \theta} & 1
\end{array}\right), & z \in \Omega_{ j},\quad j=1,4,7^{+},7^{-} \\
\left(\begin{array}{cc}
1 & (-1)^{m_j}R_{j}(z) e^{-2 i t \theta} \\
0 & 1
\end{array}\right), & z \in \Omega_{j},\quad j=3^{-},3^{+},5,8 \\
\left(\begin{array}{ll}
1 & 0 \\
0 & 1
\end{array}\right), & z \in \Omega_{2} \cup \Omega_{6},\end{cases}
$$
where $m_{3^{+}}=m_4=m_{7^{-}}=m_8=1, m_1=m_5=m_{3^{-}}=m_{7^{+}}=0$.

Then we do the following transformation to get $M^{(2)}(z)$
\begin{equation}
M^{(2)}(z)=M^{(1)}(z) \mathcal{R}^{(1)}(z),
\end{equation}
where
 Matrix $M^{(2)}(z)$ satisfies the following mixed $\bar{\partial}$-$RHP$:
\begin{rhp}\label{5r}
Find a matrix $M^{(2)}(z)$  that satisfies the following properties:

$\bullet$ Analyticity: $M^{(2)}(z)$ is analytic within $\mathbb{C}\backslash \Sigma$;

$\bullet$ Normalization: $M^{(2)}(z)=I+\mathcal{O}\left(z^{-1}\right), \quad z \rightarrow \infty;$

$\bullet$ Jump condition: $M^{(2)}_{+}(z)=M^{(2)}_{-}(z) V^{(2)}(z), \quad z \in \Sigma,$
where
$$
V^{(2)}(z)= \begin{cases}\left(\begin{array}{cc}
1 & 0 \\
R_{1}(z) e^{2 i t \theta} & 1
\end{array}\right), & z \in \Sigma_{1}, \\
\left(\begin{array}{cc}
1 & -R^{+}_{3}(z) e^{-2 i t \theta} \\
0 & 1
\end{array}\right), & z \in \Sigma_{2}, \\
\left(\begin{array}{cc}
1 & R^{-}_{3}(z) e^{-2 i t \theta} \\
0 & 1
\end{array}\right), & z \in \Sigma_{3} ,\\
\left(\begin{array}{cc}
1 &  \\
-R_{4}(z) e^{2 i t \theta} & 1
\end{array}\right), & z \in \Sigma_{4}, \\
\left(\begin{array}{cc}
1 & R_{5}(z) e^{-2 i t \theta}\\
0  & 1
\end{array}\right), & z \in \Sigma_{5},\\
\left(\begin{array}{cc}
1 & 0\\
-R^{-}_{7}(z) e^{2 i t \theta}  & 1
\end{array}\right), & z \in \Sigma_{6},\\
\left(\begin{array}{cc}
1 & 0\\
R^{+}_{7}(z) e^{2 i t \theta}  & 1
\end{array}\right), & z \in \Sigma_{7},\\
\left(\begin{array}{cc}
1 & -R_{8}(z) e^{-2 i t \theta} \\
0 & 1
\end{array}\right), & z \in \Sigma_{8}.
 \end{cases}
$$

$\bullet$  $\bar{\partial}$ derivative:
\begin{equation}\label{4.1}
\bar{\partial} M^{(2)}(z)=M^{(2)}(z) \bar{\partial} \mathcal{R}^{(1)}(z), ~~z\in \mathbb{C}\backslash \Sigma
\end{equation}
where
$$
 \bar{\partial} \mathcal{R}^{(1)}(z)= \begin{cases}\left(\begin{array}{cc}
1 & 0 \\
(-1)^{m_j}\bar{\partial}R_{j}(z) e^{2 i t \theta} & 1
\end{array}\right), & z \in \Omega_{ j},\quad j=1,4,7^{+},7^{-} \\
\left(\begin{array}{cc}
1 & (-1)^{m_j}\bar{\partial}R_{j}(z) e^{-2 i t \theta} \\
0 & 1
\end{array}\right), & z \in \Omega_{j},\quad j=3^{-},3^{+},5,8 \\
\left(\begin{array}{ll}
0 & 0 \\
0 & 0
\end{array}\right), & z \in \Omega_{2} \cup \Omega_{6},\end{cases}
$$
where $m_{3^{+}}=m_4=m_{7^{-}}=m_8=1, m_1=m_5=m_{3^{-}}=m_{7^{+}}=0$.
\end{rhp}
The relationship between the solution of  the nonlocal Hirota equation  and $M^{(2)}(z)$ is
\begin{equation}
u(x,t)= 2 i\lim _{z \rightarrow \infty} (z M^{(2)}(z))_{12}.
\end{equation}

\section{The decomposition of the mixed $\bar{\partial}$-$RHP$ }
This section is to decompose the mixed $\bar{\partial}$-$RHP$ \ref{5r}. The main idea is based on whether $\bar{\partial} \mathcal{R}^{(1)}(z)$ is 0. Therefore, we can decompose $M^{(2)}(z)=E(z)M^{(2)}_{rhp}(z)$ as follows. Here, when $\bar{\partial} \mathcal{R}^{(1)}(z)=0$, it corresponds to a model RHP about $M^{(2)}_{rhp}(z)$, when $\bar{\partial} \mathcal{R}^{(1)}(z)\neq0$, it corresponds to a pure $\bar{\partial}$ problem about $E(z)$, and there is $ E(z)=M^{(2)}(z){M^{(2)}_{rhp}}^{-1}(z)$.

\subsection{The model RH problem}
This subsection focuses on the analysis of the model RHP about $M^{(2)}_{rhp}(z)$. There are two stationary phase points in the nonlocal Hirota equation, so two models RHP are needed to solve them respectively.

First, when $z\rightarrow z_1$, we can do the scale transformation into:
$$
\tilde{M}_{z_{1}}(k)=M_{rhp}^{(2)}(z), ~z=\frac{k}{\sqrt{8\left(6 \beta z_{1}+\alpha\right) t}}+z_{1},
$$
and $\tilde{M}_{z_{1}}(k)$ meets the following RHP:
\begin{rhp}\label{r6}
Find a matrix-valued function $\tilde{M}_{z_{1}}(k)$ with following properties:

$\bullet$ Analyticity: $\tilde{M}_{z_{1}}(k)$ is analytic within $\mathbb{C}\backslash \tilde{\Sigma},~~\tilde{\Sigma}=\Sigma-z_1;$

$\bullet$  Asymptotic behavior:
$\tilde{M}_{z_{1}}(k)=I+\mathcal{O}\left(k^{-1}\right), \quad k \rightarrow \infty,$

$\bullet$  Jump condition: $\tilde{M}^{+}_{z_{1}}(k)=\tilde{M}^{-}_{z_{1}}(k)\tilde{V}_{z_1}(k), \quad k \in \tilde{\Sigma};$

where
$$
\tilde{V}_{z_1}(k)= k^{i \nu(z_1) \hat{\sigma}_{3}} e^{-\frac{i k^{2}}{4} \hat{\sigma}_{3}}
\begin{cases}\left(\begin{array}{cc}
1 & 0 \\
-r(z_1)\varpi_{1}(z_1) & 1
\end{array}\right),  \\
\left(\begin{array}{cc}
1 & \frac{-\kappa \tilde{r}(z_1)}{1-\kappa r(z_1)\tilde{r}(z_1)}\varpi_1^{-1}(z_1) \\
0 & 1
\end{array}\right),  \\
\left(\begin{array}{cc}
1 & 0 \\
\frac{r(z_1)}{1-\kappa r(z_1)\tilde{r}(z_1)}\varpi_{1}(z) & 1
\end{array}\right),  \\
\left(\begin{array}{cc}
1 & \kappa \tilde{r}(z_1)\varpi_1^{-1}(z_1) \\
0 & 1
\end{array}\right).
\end{cases}
$$
and
$\varpi_1(z_1)=\delta^{-2}_{1}(z_1)(48t\beta z_1+8\alpha t)^{i \nu(z_1)} e^{8it\beta(z-z_1)^3-16it\beta z_1^3-4it\alpha z_1^2}.$
\end{rhp}
\begin{figure}[h]
\center
\begin{tikzpicture}\usetikzlibrary{arrows}
\coordinate [label=0:] ()at (2,0.1);
\coordinate [label=0:] ()at (-2.6,0.1);
\coordinate [label=0: $0$] ()at (-0.2,-0.3);
\coordinate [label=0: $\tilde{\Sigma}_1$] ()at (-2.2,1.3);
\coordinate [label=0: $\tilde{\Sigma}_2$] ()at (1.5,1.3);
\coordinate [label=0: $\tilde{\Sigma}_8$] ()at (-2.2,-1.3);
\coordinate [label=0: $\tilde{\Sigma}_7$] ()at (1.5,-1.3);
\coordinate [label=0: $\tilde{\Omega}_1$] ()at (-1.3,0.43);
\coordinate [label=0: $\tilde{\Omega}_2$] ()at (-0.2,1);
\coordinate [label=0: $\tilde{\Omega}_{3}^{+}$] ()at (0.8,0.43);
\coordinate [label=0: $\tilde{\Omega}_{7}^{+}$] ()at (0.8,-0.43);
\coordinate [label=0: $\tilde{\Omega}_8$] ()at (-1.3,-0.43);
\coordinate [label=0: $\tilde{\Omega}_6$] ()at (-0.2,-1);
\draw[thick](1.5,-1.5)--(-1.5,1.5);
\draw[thick](1.5,1.5)--(-1.5,-1.5);
\draw[thick](3,0)--(-3,0);
\draw[->,thick] (1.5,1.5)--(1,1);
\draw[->,thick] (1.5,-1.5)--(1,-1);
\draw[->,thick] (-1.5,1.5)--(-1,1);
\draw[->,thick] (-1.5,-1.5)--(-1,-1);
\draw[->,thick] (1.5,0)--(1,0);
\draw[->,thick] (-1.5,0)--(-1,0);
\end{tikzpicture}
\caption{\small  The jump contour $\tilde{\Sigma}$ and domains $\tilde{\Omega}_j$. }
\end{figure}

If we make the following change:
$$
\tilde{M}_{z_{1}}(k)=\mathcal{M}\mathcal{P}k^{-i \nu(z_1) {\sigma}_{3}} e^{\frac{i k^{2}}{4} {\sigma}_{3}},
$$
where $\mathcal{P}$ is a constant matrix ,
$$
\mathcal{P}=\begin{cases}\left(\begin{array}{cc}
1 & 0 \\
r(z_1)\varpi_1(z_1) & 1
\end{array}\right),~~~k\in \tilde{\Omega}_1  \\
\left(\begin{array}{cc}
1 & \frac{\kappa \tilde{r}(z_1)}{1-\kappa r(z_1)\tilde{r}(z_1)}\varpi_1^{-1}(z_1) \\
0 & 1
\end{array}\right),~~~k\in \tilde{\Omega}_3  \\
\left(\begin{array}{cc}
1 & 0 \\
-\frac{r(z_1)}{1-\kappa r(z_1)\tilde{r}(z_1)}\varpi_1(z_1) & 1
\end{array}\right), ~~~k\in \tilde{\Omega}_7 \\
\left(\begin{array}{cc}
1 & -\kappa \tilde{r}(z_1)\varpi_1^{-1}(z_1) \\
0 & 1
\end{array}\right), ~~~k\in \tilde{\Omega}_8\\
\left(\begin{array}{ll}
1 & 0 \\
0 & 1
\end{array}\right),~~~k\in \tilde{\Omega}_2\cup \tilde{\Omega}_6.
\end{cases}
$$
and $\mathcal{M}(k)$ satisfies the following RHP:
\begin{rhp}
Find a matrix value function $\mathcal{M}(k)$ admitting:

$\bullet$ Analyticity: $\mathcal{M}(k)$ is  holomorphic within $\mathbb{C}\backslash {\mathbb{R}};$

$\bullet$  Asymptotic behavior: $\mathcal{M}(k)k^{-i \nu(z_1) {\sigma}_{3}} e^{\frac{i k^{2}}{4} {\sigma}_{3}}= I+\mathcal{O}\left(k^{-1}\right), \quad k \rightarrow \infty,$

$\bullet$  Jump condition: $\mathcal{M}^{+}(k)=\mathcal{M}^{-}(k)\mathcal{V}(k), \quad k \in \mathbb{R};$
where
$$
\mathcal{V}(k)= \begin{cases}\left(\begin{array}{cc}
1-\kappa r(z_1)\tilde{r}(z_1) & -\kappa \tilde{r}(z_1)\varpi_1^{-1}(z_1) \\
r(z_1)\varpi_1(z_1) & 1
\end{array}\right).  \\
\end{cases}
$$
\end{rhp}
In fact, the above RHP can be transformed into the well-known Weber equation, and its solution is given by the parabolic cylindrical function:

\noindent
when $k\in \mathbb{C}_{+}$
$$
\begin{aligned}
&\mathcal{M}^{+}_{11}(k)=e^{-\frac{3 \pi \nu(z_1)}{4}} D_{a_1}\left(e^{-\frac{3 i \pi}{4}} k\right), \\
&\mathcal{M}^{+}_{12}=e^{\frac{3 \pi \nu(z_1)}{4}} \beta_{21}^{-1}(z_1)\left[\partial_{k} D_{-a_1}\left(e^{-\frac{\pi i}{4}} k\right)-\frac{i k}{2} D_{-a_1}\left(e^{-\frac{\pi i}{4}} k\right)\right], \\
&\mathcal{M}^{+}_{21}=e^{\frac{-3 \pi \nu(z_1)}{4}} \beta_{12}^{-1}(z_1)\left[\partial_{k} D_{a_1}\left(e^{-\frac{3 \pi i}{4}} k\right)+\frac{i k}{2} D_{a_1}\left(e^{-\frac{3 \pi i}{4}} k\right)\right], \\
&\mathcal{M}^{+}_{22}=e^{\frac{\pi \nu(z_1)}{4} } D_{-a_1}\left(e^{\frac{-i\pi}{4}} k\right).
\end{aligned}
$$
when $k\in \mathbb{C}_{-}$
$$
\begin{aligned}
&\mathcal{M}^{-}_{11}=e^{\frac{\pi \nu(z_1)}{4}} D_{a_1}\left(e^{\frac{\pi i }{4}} k\right), \\
&\mathcal{M}^{-}_{12}=e^{\frac{-3 \pi \nu(z_1)}{4}} \beta_{21}^{-1}(z_1)\left[\partial_{k} D_{-a_1}\left(e^{\frac{3 \pi i }{4}} k\right)-\frac{i k}{2} D_{-a_1}\left(e^{\frac{3 \pi i }{4}} k\right)\right], \\
&\mathcal{M}^{-}_{21}=e^{\frac{\pi \nu(z_1)}{4}} \beta_{12}^{-1}(z_1)\left[\partial_{k} D_{a_1}\left(e^{\frac{\pi i}{4} } k\right)+\frac{i k}{2} D_{a_1}\left(e^{\frac{\pi i}{4}} k\right)\right], \\
&\mathcal{M}^{-}_{22}=e^{\frac{-3 \pi \nu(z_1)}{4}} D_{a_1}\left(e^{\frac{3 \pi i}{4} } k\right),
\end{aligned}
$$
where
$$
\begin{aligned}
&\beta_{12}(z_1)=t^{Im\nu(z_1)}\frac{\sqrt{2 \pi} e^{\frac{\pi i}{4}} e^{-\frac{\pi \nu(z_1)}{2}}}{\vartheta(z_1)\Gamma(-i \nu(z_1))},\\
&\beta_{21}(z_1)=-t^{-Im\nu(z_1)}\frac{\sqrt{2 \pi} e^{-\frac{\pi i}{4}} e^{-\frac{\pi \nu(z_1)}{2}}}{\tilde{\vartheta}(z_1) \Gamma(i \nu(z_1))}, ~~~\beta_{12}(z_1)\beta_{21}(z_1)=\nu(z_1),\\
&\vartheta(z_1)=r(z_1)\delta^{-2}_{1}(z_1)e^{iRe\nu(z_1)In(48t\beta z_1+8\alpha t)+8it\beta(z-z_1)^3-16it\beta z_1^3-4it\alpha z_1^2},\\
&\tilde{\vartheta}(z_1)=\kappa\tilde{r}(z_1)\delta^{2}_{1}(z_1)e^{-iRe\nu(z_1)In(48t\beta z_1+8\alpha t)-8it\beta(z-z_1)^3+16it\beta z_1^3-4it\alpha z_1^2},
\end{aligned}
$$
and $D_{a_1}(k)$ is the  solution of
$$
\partial^{2}_{k}D_{a_1}(k)+(\frac{1}{2}-\frac{k^2}{4}+a_1)D_{a_1}(k)=0,~~~a_1=i\nu(z_1).
$$
The following scale transformation is still be considered, when $z\rightarrow z_2$,
$$
\hat{M}_{z_{2}}(k)=M_{rhp}^{(2)}(z),~~~ z=\frac{k}{\sqrt{8\left(6 \beta z_{2}+\alpha\right) t}}+z_{2},
$$
and $\hat{M}_{z_{2}}(k)$ meets the following RHP:
\begin{rhp}
Find a matrix-valued function $\hat{M}_{z_{2}}(k)$ with following properties:

$\bullet$ Analyticity: $\hat{M}_{z_{2}}(k)$ is analytic within $\mathbb{C}\backslash \hat{\Sigma},~~\hat{\Sigma}=\Sigma-z_2;$

$\bullet$  Asymptotic behavior:
$\hat{M}_{z_{2}}(k)=I+\mathcal{O}\left(k^{-1}\right), \quad k \rightarrow \infty,$

$\bullet$  Jump condition: $\hat{M}^{+}_{z_{1}}(k)=\tilde{M}^{-}_{z_{2}}(k)\hat{V}_{z_2}(k), \quad k \in \hat{\Sigma};$

where
$$
\hat{V}_{z_2}(k)= k^{i \nu(z_2) \hat{\sigma}_{3}} e^{-\frac{i k^{2}}{4} \hat{\sigma}_{3}}
\begin{cases}\left(\begin{array}{cc}
1 & 0 \\
r(z_2)\varpi_{2}(z_2) & 1
\end{array}\right),  \\
\left(\begin{array}{cc}
1 & \frac{\kappa\tilde{r}(z_2)}{1-\kappa r(z_2)\tilde{r}(z_1)}\varpi_2^{-1}(z_2) \\
0 & 1
\end{array}\right),  \\
\left(\begin{array}{cc}
1 & 0 \\
-\frac{r(z_2)}{1-\kappa r(z_2)\tilde{r}(z_2)}\varpi_{2}(z) & 1
\end{array}\right),  \\
\left(\begin{array}{cc}
1 & -\kappa\tilde{r}(z_2)\varpi_2^{-1}(z_2) \\
0 & 1
\end{array}\right),
\end{cases}
$$
and
$\varpi_2(z_2)=\delta^{-2}_{2}(z_2)(48t\beta z_2+8\alpha t)^{i \nu(z_2)} e^{8it\beta(z-z_2)^3-16it\beta z_2^3-4it\alpha z_2^2}.$
\end{rhp}
\begin{figure}
\center
\begin{tikzpicture}\usetikzlibrary{arrows}
\coordinate [label=0:] ()at (2,0.1);
\coordinate [label=0:] ()at (-2.6,0.1);
\coordinate [label=0: $0$] ()at (-0.2,-0.3);
\coordinate [label=0: $\hat{\Sigma}_3$] ()at (-2.2,1.3);
\coordinate [label=0: $\hat{\Sigma}_4$] ()at (1.5,1.3);
\coordinate [label=0: $\hat{\Sigma}_6$] ()at (-2.2,-1.3);
\coordinate [label=0: $\hat{\Sigma}_5$] ()at (1.5,-1.3);
\coordinate [label=0: $\hat{\Omega}_{3}^{-}$] ()at (-1.3,0.43);
\coordinate [label=0: $\hat{\Omega}_2$] ()at (-0.2,1);
\coordinate [label=0: $\hat{\Omega}_{4}$] ()at (0.8,0.43);
\coordinate [label=0: $\hat{\Omega}_{5}$] ()at (0.8,-0.43);
\coordinate [label=0: $\hat{\Omega}_{7}^{-}$] ()at (-1.3,-0.43);
\coordinate [label=0: $\hat{\Omega}_6$] ()at (-0.2,-1);
\draw[thick](1.5,-1.5)--(-1.5,1.5);
\draw[thick](1.5,1.5)--(-1.5,-1.5);
\draw[thick](3,0)--(-3,0);
\draw[->,thick] (1.5,1.5)--(1,1);
\draw[->,thick] (1.5,-1.5)--(1,-1);
\draw[->,thick] (-1.5,1.5)--(-1,1);
\draw[->,thick] (-1.5,-1.5)--(-1,-1);
\draw[->,thick] (1.5,0)--(1,0);
\draw[->,thick] (-1.5,0)--(-1,0);
\end{tikzpicture}
\caption{\small  The jump contour $\hat{\Sigma}$ and domains $\hat{\Omega}_j$. }
\end{figure}
If we make the following change:
$$
\hat{M}_{z_{2}}(k)=\mathcal{\hat{M}}\mathcal{\hat{P}}k^{-i \nu(z_2) {\sigma}_{3}} e^{\frac{i k^{2}}{4} {\sigma}_{3}},
$$
where $\mathcal{\hat{P}}$ is a constant matrix ,
$$
\mathcal{\hat{P}}=\begin{cases}\left(\begin{array}{cc}
1 & 0 \\
-r(z_2)\varpi_2(z_2) & 1
\end{array}\right),~~~k\in \hat{\Omega}_4  \\
\left(\begin{array}{cc}
1 & -\frac{\kappa \tilde{r}(z_2)}{1-\kappa r(z_2)\tilde{r}(z_2)}\varpi_2^{-1}(z_2) \\
0 & 1
\end{array}\right),~~~k\in \hat{\Omega}_3^{-}  \\
\left(\begin{array}{cc}
1 & 0 \\
\frac{r(z_2)}{1-\kappa r(z_2)\tilde{r}(z_2)}\varpi_2(z_2) & 1
\end{array}\right), ~~~k\in \hat{\Omega}_7^{-} \\
\left(\begin{array}{cc}
1 & \kappa \tilde{r}(z_2)\varpi_2^{-1}(z_2) \\
0 & 1
\end{array}\right), ~~~k\in\hat{\Omega}_5 \\
\left(\begin{array}{ll}
1 & 0 \\
0 & 1
\end{array}\right),~~~k\in \hat{\Omega}_2\cup \hat{\Omega}_6.
\end{cases}
$$
and $\mathcal{\hat{M}}(k)$ satisfies the following RHP:
\begin{rhp}
Find a matrix value function $\mathcal{\hat{M}}(k)$ admitting:

$\bullet$ Analyticity: $\mathcal{\hat{M}}(k)$ is  holomorphic within $\mathbb{C}\backslash {\mathbb{R}};$

$\bullet$  Asymptotic behavior: $\mathcal{\hat{M}}(k)k^{-i \nu(z_2) {\sigma}_{3}} e^{\frac{i k^{2}}{4} {\sigma}_{3}}= I+\mathcal{O}\left(k^{-1}\right), \quad k \rightarrow \infty,$

$\bullet$  Jump condition: $\mathcal{\hat{M}}^{+}(k)=\mathcal{\hat{M}}^{-}(k)\mathcal{\hat{V}}(k), \quad k \in \mathbb{R};$
where
$$
\mathcal{\hat{V}}(k)= \begin{cases}\left(\begin{array}{cc}
1-\kappa r(z_2)\tilde{r}(z_2) & -\kappa \tilde{r}(z_2)\varpi_2^{-1}(z_2) \\
r(z_2)\varpi_2(z_2) & 1
\end{array}\right).  \\
\end{cases}
$$
\end{rhp}
Similarly, we can write the above solution. When $k\in \mathbb{C}_{+}$
$$
\begin{aligned}
&\mathcal{\hat{M}}^{+}_{11}(k)=e^{-\frac{3 \pi \nu(z_2)}{4}} D_{a_2}\left(e^{-\frac{3 i \pi}{4}} k\right), \\
&\mathcal{\hat{M}}^{+}_{12}=e^{\frac{3 \pi \nu(z_2)}{4}} \beta_{21}^{-1}(z_2)\left[\partial_{k} D_{-a_2}\left(e^{-\frac{\pi i}{4}} k\right)-\frac{i k}{2} D_{-a_2}\left(e^{-\frac{\pi i}{4}} k\right)\right], \\
&\mathcal{\hat{M}}^{+}_{21}=e^{\frac{-3 \pi \nu(z_2)}{4}} \beta_{12}^{-1}(z_2)\left[\partial_{k} D_{a_2}\left(e^{-\frac{3 \pi i}{4}} k\right)+\frac{i k}{2} D_{a_2}\left(e^{-\frac{3 \pi i}{4}} k\right)\right], \\
&\mathcal{\hat{M}}^{+}_{22}=e^{\frac{\pi \nu(z_2)}{4} } D_{-a_2}\left(e^{\frac{-i\pi}{4}} k\right).
\end{aligned}
$$
When $k\in \mathbb{C}_{-}$
$$
\begin{aligned}
&\mathcal{\hat{M}}^{-}_{11}=e^{\frac{\pi \nu(z_2)}{4}} D_{a_2}\left(e^{\frac{\pi i }{4}} k\right), \\
&\mathcal{\hat{M}}^{-}_{12}=e^{\frac{-3 \pi \nu(z_2)}{4}} \beta_{21}^{-1}(z_2)\left[\partial_{k} D_{-a_2}\left(e^{\frac{3 \pi i }{4}} k\right)-\frac{i k}{2} D_{-a_2}\left(e^{\frac{3 \pi i }{4}} k\right)\right], \\
&\mathcal{\hat{M}}^{-}_{21}=e^{\frac{\pi \nu(z_2)}{4}} \beta_{12}^{-1}(z_2)\left[\partial_{k} D_{a_2}\left(e^{\frac{\pi i}{4} } k\right)+\frac{i k}{2} D_{a_2}\left(e^{\frac{\pi i}{4}} k\right)\right], \\
&\mathcal{\hat{M}}^{-}_{22}=e^{\frac{-3 \pi \nu(z_2)}{4}} D_{a_2}\left(e^{\frac{3 \pi i}{4} } k\right),
\end{aligned}
$$
where
$$
\begin{aligned}
&\beta_{12}(z_2)=t^{Im\nu(z_2)}\frac{\sqrt{2 \pi} e^{\frac{\pi i}{4}} e^{-\frac{\pi \nu(z_2)}{2}}}{\vartheta(z_2)\Gamma(-i \nu(z_2))},\\
&\beta_{21}(z_2)=-t^{-Im\nu(z_2)}\frac{\sqrt{2 \pi} e^{-\frac{\pi i}{4}} e^{-\frac{\pi \nu(z_2)}{2}}}{\tilde{\vartheta}(z_2) \Gamma(i \nu(z_2))}, ~~~\beta_{12}(z_2)\beta_{21}(z_2)=\nu(z_2),\\
&\vartheta(z_2)=r(z_2)\delta^{-2}_{1}(z_2)e^{iRe\nu(z_2)In(48t\beta z_2+8\alpha t)+8it\beta(z-z_2)^3-16it\beta z_2^3-4it\alpha z_2^2},\\
&\tilde{\vartheta}(z_2)=\kappa\tilde{r}(z_2)\delta^{2}_{1}(z_2)e^{-iRe\nu(z_2)In(48t\beta z_2+8\alpha t)-8it\beta(z-z_2)^3+16it\beta z_2^3-4it\alpha z_2^2},
\end{aligned}
$$
and $D_{a_2}(k)$ is the  solution of
$$
\partial^{2}_{k}D_{a_2}(k)+(\frac{1}{2}-\frac{k^2}{4}+a_2)D_{a_2}(k)=0,~~~a_2=i\nu(z_2).
$$
Therefore, two RHP models are constructed here, let $D^{\varepsilon}_{z_1}$ and $D^{\varepsilon}_{z_2}$ be a disk of radius $\varepsilon$ centered at $z_1$ and $z_2$, so
$$
M_{rhp}^{(2)}(z)= \begin{cases}\tilde{M}_{z_{1}}(k), & z \in D^{\varepsilon}_{z_1},\\ \hat{M}_{z_{2}}(k), & z \in D^{\varepsilon}_{z_2},\end{cases}
$$
consider that $M_{rhp}^{(2)}(z)$ is a Laurent series expansion at $k\rightarrow\infty$
\begin{equation}\label{mrhp}
M_{rhp}^{(2)}(z)=I+\frac{\tilde{M}^{(1)}_{z_{1}}}{k}+\frac{\hat{M}^{(1)}_{z_{2}}}{k}+\mathcal{O}(k^{-2}),
\end{equation}
and $(\tilde{M}^{(1)}_{z_{1}})_{12}=-i\beta_{12}(z_1),(\hat{M}^{(1)}_{z_{1}})_{12}=-i\beta_{12}(z_2)$.

\subsection{Asymptotic analysis on the pure $\bar{\partial}$-problem}
This part mainly analyzes the pure $\bar{\partial}$-problem about $E(z)$, where $E(z)$ satisfies the following RHP:
\begin{rhp}
Find a matrix value function $E(z)$ admitting:

$\bullet$ Analyticity: $E(z)$  is continuous in $\mathbb{C}$ and  continuous in $\mathbb{C}\backslash {{\Sigma}}$ with first partial derivatives;

$\bullet$ Asymptotic behavior: $E(z)=I+\mathcal{O}\left(z^{-1}\right), \quad z \rightarrow \infty;$

$\bullet$  $\bar{\partial}$ derivative: $\bar{\partial}{E}(z)=E(z)M_{rhp}^{(2)} \bar{\partial} \mathcal{R}^{(1)}(z)\left(M_{rhp}^{(2)}\right)^{-1}$.
\end{rhp}

The above RHP is equivalent to the following integral equation
\begin{equation}\label{m3}
E(z)=I-\frac{1}{\pi} \iint_{\mathbb{C}} \frac{E W}{s-z} \mathrm{~d} A(s),
\end{equation}
where $W(z)=M^{(2)}_{rhp}(z) \bar{\partial} R^{(1)} M^{(2)}_{rhp}(z)^{-1},$ and $dA(s)$ is the Lebesgue measure on the real plane. In fact, Eq.(\ref{m3}) can also be written in the form of an operator
\begin{equation}\label{ge}
(I-\mathcal{G}) E(z)=I,
\end{equation}
where $\mathcal{G}$  is Cauchy-Green operator,
$$
\mathcal{G}[f](z)=-\frac{1}{\pi} \iint_{\mathbb{C}} \frac{f(s) W(s)}{s-z} \mathrm{~d} A(s).
$$
If the operator $(I-\mathcal{G})^{-1}$ exists, then the above equation has a solution. That is to say, we need to prove that when t is large enough, the operator $\mathcal{G}$ is small enough to deduce the existence of $(I-\mathcal{G})^{-1}$. We give the following proposition:
\begin{proposition}
For sufficiently large $t$, operator $\mathcal{G}$  is a small norm and has
\begin{equation}\label{sl}
\|\mathcal{G}\|_{L^{\infty} \rightarrow L^{\infty}}\leqslant
\left\{\begin{array}{l}
c t^{-\frac{1}{4}+\frac{1}{2}\max \{\operatorname{Im} \nu(z_{1}), \operatorname{Im}\nu(z_{2})\}}, \quad 0<\operatorname{Im}\nu(z_{i})<\frac{1}{2} \\
c t^{-\frac{1}{4}}, \quad -\frac{1}{2}<\operatorname{Im}\nu(z_{i})\leq 0, \quad i=0,1 \\
\end{array}\right.
\end{equation}
therefore, $(I-\mathcal{G})^{-1}$ exists.
\end{proposition}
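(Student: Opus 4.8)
The plan is to estimate the $L^{\infty}\to L^{\infty}$ operator norm of $\mathcal{G}$ directly and then pass to a Neumann series. For any $f\in L^{\infty}(\mathbb{C})$ the definition of $\mathcal{G}$ gives
\[
\|\mathcal{G}[f]\|_{L^{\infty}}\le \frac{1}{\pi}\|f\|_{L^{\infty}}\,\sup_{z\in\mathbb{C}}\iint_{\mathbb{C}}\frac{|W(s)|}{|s-z|}\,dA(s),
\]
so it suffices to bound the double integral by the right-hand side of (6.5) uniformly in $z$. Since $M^{(2)}_{rhp}$ solves an explicitly constructed model RHP assembled from parabolic cylinder functions, both $M^{(2)}_{rhp}$ and $(M^{(2)}_{rhp})^{-1}$ are uniformly bounded on $\mathbb{C}$; hence $|W(s)|\lesssim|\bar{\partial}\mathcal{R}^{(1)}(s)|$, where the oscillatory factors $e^{\pm 2it\theta}$ are retained inside $\bar{\partial}\mathcal{R}^{(1)}$.

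Because $\bar{\partial}\mathcal{R}^{(1)}$ is supported only on the sectors $\Omega_{j}$ (it vanishes on $\Omega_{2}\cup\Omega_{6}$), I would split the integral into the eight regional contributions and treat a representative one, say $\Omega_{1}$, the remaining seven following by the reflection symmetries about $z_{1},z_{2}$ and the real axis. The essential geometric input is the sign of $\operatorname{Re}(i\theta)$: expanding $\theta$ at the stationary point and using $\theta'(z_{1})=0$, one gets $\operatorname{Re}(2it\theta)=-2t\,\theta''(z_{1})\operatorname{Im}(s-z_{1})^{2}+\cdots$, so with $s-z_{1}=u+iv$ on the wedge $0\le v\le u$ this yields the anisotropic decay estimate $|e^{2it\theta(s)}|\le e^{-ct\,uv}$ for some $c>0$. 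Combining this with the pointwise bound (5.11), namely $|\bar{\partial}R_{1}(s)|\lesssim|r'(\operatorname{Re}s)|+|s-z_{1}|^{-\frac12-\operatorname{Im}\nu(z_{1})}$, reduces the task to two scalar integrals over the wedge.

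For the term carrying $|r'(\operatorname{Re}s)|$ I would apply a H\"older split of the Cauchy kernel in the transverse variable $v$, using $\|(s-z)^{-1}\|_{L^{p}_{v}}\lesssim|\operatorname{Re}s-\operatorname{Re}z|^{\frac1p-1}$ for $p\in(1,2)$ against the Gaussian factor $\|e^{-ct u\,\cdot}\|_{L^{q}_{v}}\lesssim(ctu)^{-1/q}$, and then a second H\"older estimate in $u$ against $r'\in L^{2}(\mathbb{R})$, which is guaranteed by Theorem \ref{rh} ($r,\tilde r\in H^{1,1}$); optimizing the exponents delivers the bound $t^{-1/4}$. For the singular term $|s-z_{1}|^{-\frac12-\operatorname{Im}\nu(z_{1})}$ I would pass to the natural scale $|s-z_{1}|\sim t^{-1/2}$ dictated by the model RHP: the substitution $u=Ut^{-1/2},\,v=Vt^{-1/2}$ converts the area element, the singular weight, and the worst-case Cauchy kernel into the powers $t^{-1}$, $t^{\frac14+\frac12\operatorname{Im}\nu(z_{1})}$, and $t^{1/2}$ respectively, whose product is $t^{-\frac14+\frac12\operatorname{Im}\nu(z_{1})}$. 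This is precisely where $\operatorname{Im}\nu(z_{j})$ enters: when $\operatorname{Im}\nu(z_{j})\le 0$ the singularity is integrable with room to spare and the exponent is dominated by $t^{-1/4}$, whereas for $0<\operatorname{Im}\nu(z_{j})<\tfrac12$ the stronger singularity weakens the decay to $t^{-\frac14+\frac12\operatorname{Im}\nu(z_{j})}$. Taking the maximum over the two stationary points $z_{1},z_{2}$ reproduces the two-case estimate (6.5).

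The hard part will be the singular integral $I_{2}$: extracting the sharp $t$-exponent requires matching the homogeneity degree of $|s-z_{1}|^{-\frac12-\operatorname{Im}\nu}$ against the anisotropic decay $e^{-ct\,uv}$ while keeping the bound uniform in the external point $z$, whose possible proximity to the wedge renders the Cauchy kernel $|s-z|^{-1}$ non-integrable in the naive sense; the H\"older split against the regularizing weights is the device that resolves this. Once (6.5) is established, smallness of $\|\mathcal{G}\|_{L^{\infty}\to L^{\infty}}$ for large $t$ is immediate, the Neumann series $\sum_{n\ge0}\mathcal{G}^{n}$ converges in operator norm, and therefore $(I-\mathcal{G})^{-1}$ exists, which is the assertion.
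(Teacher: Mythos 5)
Your proposal follows essentially the same route as the paper's proof: bound $\|\mathcal{G}\|_{L^{\infty}\to L^{\infty}}$ by $\sup_{z}\iint |\bar{\partial}\mathcal{R}^{(1)}(s)|\,|s-z|^{-1}dA(s)$ using the uniform boundedness of $M^{(2)}_{rhp}$ and its inverse, exploit the anisotropic decay $|e^{2it\theta}|\le e^{-ct\,uv}$ on each wedge, split via the estimate (5.11) into the $|r'(\operatorname{Re}s)|$ piece (handled by Cauchy--Schwarz/H\"older against the Cauchy-kernel norms, giving $t^{-1/4}$) and the singular piece $|s-z_j|^{-\frac12-\operatorname{Im}\nu(z_j)}$ (giving $t^{-\frac14+\frac12\operatorname{Im}\nu(z_j)}$), and conclude by Neumann series. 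Your self-similar rescaling $|s-z_j|\sim t^{-1/2}$ for the singular term is just a repackaging of the paper's explicit H\"older computation with $2<p<4$ (which is exactly the exponent window needed to make your scaled integral uniformly bounded in the external point), so the two arguments coincide in substance.
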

\begin{proof}
This is discussed in detail in the $\Omega_4$ region, and other regions can be obtained similarly. Let $s=m+z_2+in,z=\zeta+i\xi$, and $|m|>|n|$, for any $f \in L^{\infty}(\Omega_4)$, we have
\begin{equation}
\begin{aligned}
|\mathcal{G}(f)| &\leq \frac{1}{\pi} \iint_{\Omega_4} \frac{\left|f M_{rhp}^{(2)} \bar{\partial} \mathcal{R}^{(1)} M_{rhp}^{(2)}{ }^{-1}\right|}{|s-z|} d A(s)\\
&\leq \frac{1}{\pi}\|f\|_{L^{\infty}}\left\|M_{rhp}^{(2)}\right\|_{L^{\infty}}\left\|M_{rhp}^{(2)}{ }^{-1}\right\|_{L^{\infty}} \iint_{\Omega_4}\frac{\bar{\partial}\mathcal{R}^{(1)}}{|s-z|} dA(s).
\end{aligned}
\end{equation}
Reviewing $Re(2it\theta)$ to know
$$
\begin{aligned}
Re(2 i t \theta) &=8 \beta t(\left(-3 m^{2} n+ n^3\right)-4t(12 \beta z_{2}+2 \alpha) m n \\
&\lesssim 8 \beta t(\left(-3 m^{2} v+m^{2} n\right)-4t(12 \beta z_{2}+2 \alpha) m n \\
& \lesssim -16 \beta t  m^{2} n-4t(12 \beta z_{2}+2 \alpha) m n \\
& \lesssim-16 \beta t  m n.
\end{aligned}
$$
Combined with the boundedness of $\|M_{rhp}^{(2)}\|_{L^{\infty}}$ and Eq.(\ref{rj}), the above integral can be reduced to
\begin{equation}\label{gf}
\begin{aligned}
|\mathcal{G}(f)|
&\leq c \int_{0}^{\infty} \int_{n}^{\infty}\frac{\left|\bar{\partial} R_{4}\right| e^{-16 \beta t  m n}}{|s-z|} dm dn \leq c\left(\hbar_{1}+\hbar_{2}\right),
\end{aligned}
\end{equation}
where
$$
\begin{aligned}
&\hbar_{1}=\int_{0}^{\infty} \int_{n}^{\infty}\frac{\left|r^{\prime}(\operatorname{Re} s)\right|e^{-16 \beta t  m n}}{|s-z|}dm dn,\\
&\hbar_{2}=\int_{0}^{\infty} \int_{n}^{\infty}\frac{|s-z_2|^{-\frac{1}{2}-Im\nu(z_2)}e^{-16 \beta t  m n}}{|s-z|}dm dn.
\end{aligned}
$$
Theorem \ref{rh} shows that $r(z),\tilde{r}(z)\in H^{1,1}(\mathbb{R})$, then $r^{\prime}(z),\tilde{r}^{\prime}(z)\in L^2(\mathbb{R})$, so
$$
\begin{aligned}
\hbar_{1}&=\int_{0}^{\infty} \int_{n}^{\infty}\frac{\left|r^{\prime}(\operatorname{Re} s)\right|e^{-16 \beta t  m n}}{|s-z|}dm dn\\
&\leq \left(\left\|r^{\prime}\right\|_{L^{2}}\right) \int_{0}^{\infty} e^{-t n^{2}} \mathrm{~d} v\left(\int_{n}^{\infty}|s-z|^{-2} \mathrm{~d} m\right)^{\frac{1}{2}},
\end{aligned}
$$
besides,
\begin{equation}\label{sk}
\begin{aligned}
\left\|\frac{1}{s-z}\right\|_{L^{2}\left(n, \infty\right)}^{2}&=\int_{n}^{\infty} \frac{1}{|s-z|^{2}} d m \leq \int_{-\infty}^{\infty} \frac{1}{|s-z|^{2}} d m \\
&=\int_{-\infty}^{\infty} \frac{1}{(m+z_2-\zeta)^{2}+(n-\xi)^{2}} d m=\frac{1}{|n-\xi|} \int_{-\infty}^{\infty} \frac{1}{1+x^{2}} d x=\frac{\pi}{|n-\xi|},
\end{aligned}
\end{equation}
where $x=\frac{m+z_2-\zeta}{n-\xi}$.
Therefore, for $\hbar_{1}$
\begin{equation}\label{h1}
\hbar_{1}\lesssim \left\|r^{\prime}\right\|_{L^{2}} \int_{0}^{\infty} \frac{e^{-t n^{2}}}{|n-\xi|^{\frac{1}{2}}} \mathrm{~d} n \lesssim c t^{-\frac{1}{4}}.
\end{equation}
For the estimation of $\hbar_{2}$, we need the help of H\"{o}lder inequality with $ p> 2$ and $\frac{1}{p}+\frac{1}{q}=1$.
\begin{equation}\label{slp}
\begin{aligned}
\left\||s-z_2|^{-\frac{1}{2}-Im\nu(z_2)}\right\|_{L^{p}(n,\infty)}&=\left(\int_{n}^{\infty} {|m+i n|^{-\frac{p}{2}-Im\nu(z_2)p }} d m\right)^{\frac{1}{p} }=\left(\int_{n}^{\infty} {\left(m^{2}+n^{2}\right)^{-\frac{p}{4}-\frac{Im\nu(z_2)p}{2}}}d m\right)^{\frac{1}{p}} \\
&=n^{\frac{1}{p}-\frac{1}{2}-Im\nu(z_2)}\left(\int_{1}^{\infty} {\left(1+x^{2}\right)^{-\frac{p}{4}-\frac{Im\nu(z_2)p}{2}}} d x\right)^{\frac{1}{p}} \leq c n^{\frac{1}{p}-\frac{1}{2}-Im\nu(z_2)}.
\end{aligned}
\end{equation}
Similar estimates can be proved
$$
\left\|\frac{1}{s-z}\right\|_{L^{q}(n, \infty)} \leq c|n-\xi|^{\frac{1}{q}-1}, \quad \frac{1}{q}+\frac{1}{p}=1.
$$
Then we can easily prove that
\begin{equation}\label{h2}
\begin{aligned}
\hbar_{2} &\leq c \int_{0}^{+\infty}\left\||s-z_2|^{-\frac{1}{2}-Im\nu(z_2)}\right\|_{L^{p}}\left\|\frac{1}{s-z}\right\|_{L^{q}}e^{-16 \beta t  m n}d n\\
&\leq c\left[\int_{0}^{\xi}  n^{\frac{1}{p}-\frac{1}{2}-Im\nu(z_2)}|n-\xi|^{\frac{1}{q}-1}e^{-n^2t} d n+\int_{\xi}^{\infty} n^{\frac{1}{p}-\frac{1}{2}-Im\nu(z_2)}|n-\xi|^{\frac{1}{q}-1}e^{-n^2t} d n\right]\\
&=c \int_{0}^{1} \xi^{\frac{1}{2}-Im\nu(z_2)}\eta^{\frac{1}{p}-\frac{1}{2}-Im\nu(z_2)}(1-\eta)^{\frac{1}{q}-1} e^{-\eta^2\xi^2t}d \eta+\int_{0}^{\infty} (\eta+\xi)^{\frac{1}{p}-\frac{1}{2}-Im\nu(z_2)}\eta^{\frac{1}{q}-1} e^{-(\eta+\xi)^2t}d \eta\\
&\lesssim \xi^{\frac{1}{2}-Im\nu(z_2)}\int_{0}^{1}\left(t^{\frac{1}{2}}\xi \eta\right)^{-\frac{1}{2}+Im\nu(z_2)} \eta^{\frac{1}{p}-\frac{1}{2}-Im\nu(z_2)}(1-\eta)^{\frac{1}{q}-1} d \eta+\int_{0}^{\infty} e^{- t \eta^{2}} \eta^{-\frac{1}{2}-Im\nu(z_2} d \eta \\
&\lesssim ct^{-\frac{1}{4}+\frac{Im\nu(z_2)}{2}}.
\end{aligned}
\end{equation}
In the $\Omega_1$ region, for any $f \in L^{\infty}(\Omega_1)$, we can get
\begin{equation}\label{gf1}
\begin{aligned}
|\mathcal{G}(f)| &\leq \frac{1}{\pi} \iint_{\Omega_1} \frac{\left|f M_{rhp}^{(2)} \bar{\partial} \mathcal{R}^{(1)} M_{rhp}^{(2)}{ }^{-1}\right|}{|s-z|} d A(s)\\
&\leq \frac{1}{\pi}\|f\|_{L^{\infty}}\left\|M_{rhp}^{(2)}\right\|_{L^{\infty}}\left\|M_{rhp}^{(2)}{ }^{-1}\right\|_{L^{\infty}} \iint_{\Omega_1}\frac{\bar{\partial}{R}^{(1)}}{|s-z|} dA(s)\\
&\leq ct^{-\frac{1}{4}}+ct^{-\frac{1}{4}+\frac{Im\nu(z_1)}{2}}.
\end{aligned}
\end{equation}
The same is true for other areas. With (\ref{h1}),(\ref{h2}) and (\ref{gf1}), the result (\ref{gf}) is verified.
\end{proof}

From the above proposition, we can know the solvability of Eq.(\ref{ge}), and then do the following Laurent series expansion for $E(z)$
$$
E(z)=I+\frac{E^{(1)}}{z}+\mathcal{O}\left(z^{-2}\right), \quad z \rightarrow \infty,
$$
from the previous Eq.(\ref{m3}), it is easy to know
\begin{equation}\label{e1}
E^{(1)}=\frac{1}{\pi} \iint_{\mathbb{C}} E(s) W^{(3)}(s) d A(s).
\end{equation}
Further, we can prove that
\begin{proposition}
For a large t, $E^{(1)}$ meets the following estimate
\begin{equation}\label{m13}
\left|E^{(1)}\right|\leq  \left\{\begin{array}{l}
c t^{-\frac{3}{4}+\frac{1}{2}\max \{\operatorname{Im} \nu(z_{1}), \operatorname{Im}\nu(z_{2})\}}, \quad 0<\operatorname{Im}\nu(z_{j})<\frac{1}{2} \\
c t^{-\frac{3}{4}}, ~~~~~\quad -\frac{1}{2}<\operatorname{Im}\nu(z_{j})\leq 0, \quad j=1,2 \\
\end{array}\right.
\end{equation}
\end{proposition}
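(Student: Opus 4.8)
The plan is to read off from Eq.(\ref{e1}) that $E^{(1)}=\frac1\pi\iint_{\mathbb{C}}E(s)W(s)\,dA(s)$ with $W=M_{rhp}^{(2)}\,\bar{\partial}\mathcal{R}^{(1)}\,(M_{rhp}^{(2)})^{-1}$ is governed by \emph{the same} double integral that controlled the operator $\mathcal{G}$ in the preceding proposition, except that the Cauchy kernel $(s-z)^{-1}$ is now absent. This missing factor is precisely what upgrades the decay from $t^{-1/4}$ to $t^{-3/4}$. Concretely, I would first invoke the solvability just established: since $(I-\mathcal{G})^{-1}$ exists and is bounded, $\|E\|_{L^\infty}\leq c$, and together with the boundedness of $\|M_{rhp}^{(2)}\|_{L^\infty}$ and $\|(M_{rhp}^{(2)})^{-1}\|_{L^\infty}$ this yields
$$
|E^{(1)}|\leq\frac1\pi\iint_{\mathbb{C}}|E(s)|\,|W(s)|\,dA(s)\leq c\sum_{j}\iint_{\Omega_j}|\bar{\partial}R_j(s)|\,e^{\operatorname{Re}(\pm2it\theta)}\,dA(s).
$$

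As before it suffices to treat one representative region, say $\Omega_4$; the others are identical after relabelling $z_1\leftrightarrow z_2$. Writing $s=m+z_2+in$ with $|m|>|n|$, reusing the bound $\operatorname{Re}(2it\theta)\lesssim-16\beta t\,mn$ derived in the previous proposition, and inserting the pointwise estimate (\ref{rj}) for $\bar{\partial}R_4$, the integral splits into
$$
\hbar_1=\int_0^\infty\!\!\int_n^\infty|r'(\operatorname{Re}s)|\,e^{-16\beta t\,mn}\,dm\,dn,\qquad \hbar_2=\int_0^\infty\!\!\int_n^\infty|s-z_2|^{-\frac12-Im\nu(z_2)}\,e^{-16\beta t\,mn}\,dm\,dn.
$$

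For $\hbar_1$ I would apply Cauchy--Schwarz in $m$, using $r'\in L^2(\mathbb{R})$ from Theorem \ref{rh}: the $L^2(n,\infty)$ norm of $e^{-16\beta t\,mn}$ contributes $(tn)^{-1/2}$, after which the residual Gaussian $\int_0^\infty n^{-1/2}e^{-ct\,n^2}\,dn\sim t^{-1/4}$ gives $\hbar_1\lesssim t^{-1/2}\cdot t^{-1/4}=t^{-3/4}$. The more delicate term is $\hbar_2$: here I would use Hölder with some $p>2$, applying the $L^p(n,\infty)$ bound (\ref{slp}) for the singular factor and an $L^q(n,\infty)$ bound of the shape $\|e^{-16\beta t\,mn}\|_{L^q}\lesssim(tn)^{-1/q}e^{-ct\,n^2}$ for the exponential. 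Substituting $u=n\sqrt{t}$ in the single Gaussian integral that remains and collecting the powers of $t$ (with $\tfrac1p+\tfrac1q=1$), all $p$-dependence cancels and one arrives at $\hbar_2\lesssim t^{-3/4+Im\nu(z_2)/2}$.

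Combining $\hbar_1$ and $\hbar_2$ on $\Omega_4$ and repeating verbatim at $z_1$ produces the two competing rates $t^{-3/4+Im\nu(z_1)/2}$ and $t^{-3/4+Im\nu(z_2)/2}$, whose maximum gives the stated bound when $0<Im\nu(z_j)<\tfrac12$; when $-\tfrac12<Im\nu(z_j)\leq0$ the singular contribution $\hbar_2$ is no worse than $\hbar_1$, so the rate collapses to $t^{-3/4}$. The main technical obstacle is the estimate of $\hbar_2$, since the singularity $|s-z_2|^{-\frac12-Im\nu(z_2)}$ is only borderline integrable and the final exponent hinges on tracking the $Im\nu(z_j)$-dependent powers cleanly through the Hölder step. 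This is exactly the point at which the nonlocal reduction $v=\kappa u^{*}(-x,t)$, through a possibly nonzero $Im\nu(z_j)$, modifies the classical residual estimate.
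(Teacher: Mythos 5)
Your proposal is correct and follows essentially the same route as the paper's own proof: the same reduction via boundedness of $\|E\|_{L^\infty}$ and $\|M_{rhp}^{(2)}\|_{L^\infty}$, the same model region $\Omega_4$ with the phase bound $\operatorname{Re}(2it\theta)\lesssim-16\beta t\,mn$, and the same split into a Cauchy--Schwarz estimate for the $|r'|$ term (yielding $t^{-\frac{3}{4}}$) and a H\"older estimate using the $L^p$ bound (\ref{slp}) for the singular term (yielding $t^{-\frac{3}{4}+\frac{1}{2}\operatorname{Im}\nu(z_2)}$), with the $p$-dependence cancelling exactly as you say. Only cosmetic details differ: the paper denotes the two integrals $\ell_1,\ell_2$ rather than $\hbar_1,\hbar_2$ and records explicitly the condition $2<p<4$, $-1<\frac{2}{p}-\frac{3}{2}<-\frac{1}{2}$ ensuring convergence of the borderline singular integral, a point you flag but do not spell out.
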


\begin{proof}
Here, we still take region $\Omega_4$ as an example to prove that other regions have similar steps. According to Eq.(\ref{m3})
\begin{equation}
\begin{aligned}
\left|E^{(1)}\right| &\leq \frac{1}{\pi} \iint_{\Omega_4} \left|E M_{rhp}^{(2)} \bar{\partial} \mathcal{R}^{(1)} M_{rhp}^{(2)}{ }^{-1}\right| d A(s)\\
&\leq \frac{1}{\pi}\left\|E\right\|_{L^{\infty}}\left\|M_{rhp}^{(2)}\right\|_{L^{\infty}}\left\|\left(M_{rhp}^{(2)}\right)^{-1}\right\|_{L^{\infty}} \iint_{\Omega_4}\left|\bar{\partial} R_{4} e^{2 i t \theta}\right| d A(s),
\end{aligned}
\end{equation}
the boundedness of $\|E\|_{L^{\infty}},\|M_{rhp}^{(2)}\|_{L^{\infty}}$ and the estimation of $\bar{\partial} R_{4}$ (\ref{rj}), the above estimation be reduced to
\begin{equation}
\begin{aligned}
\left|E^{(1)}\right|&\leq c \iint_{\Omega_4}\left(|r^{\prime}(\operatorname{Re} s)|+|s-z_2|^{-\frac{1}{2}-Im\nu(z_2)}\right)e^{-16 \beta t  m n} d A(s)\\
&\leq c\left(\ell_1+\ell_2\right)
\end{aligned}
\end{equation}
where
$$
\begin{aligned}
&\ell_1=\iint_{\Omega_4}|r^{\prime}(\operatorname{Re} s)|e^{-16 \beta t  m n}dA(s),\\
& \ell_2=\iint_{\Omega_4}|s-z_2|^{-\frac{1}{2}-Im\nu(z_2)}e^{-16 \beta t  m n} d A(s).
\end{aligned}
$$
We constrain $\ell_{1}$ by using the Cauchy-Schwarz inequality
$$
\begin{aligned}
\ell_{1} &=\int_{0}^{\infty} \int_{v}^{\infty}\left|r^{\prime}(\operatorname{Re} s)\right| e^{- t  m n} \mathrm{d} m \mathrm{~d} n \\
& \leq \int_{0}^{\infty} e^{- t n^{2}}\left\|r^{\prime}\right\|_{L^{2}}\left(\int_{0}^{\infty} e^{-t  m n} \mathrm{~d} m\right)^{\frac{1}{2}} \mathrm{~d} n \\
& \leq c_3 t^{-\frac{1}{2}}\int_{0}^{\infty} \frac{e^{- t n^{2}}}{ \sqrt{t n}} \mathrm{~d} n \lesssim c t^{-\frac{3}{4}}.
\end{aligned}
$$
For the constraint of $\ell_2$, we follow the method of $\hbar_{2}$ and use H\"{o}lder  inequality and Eq.(\ref{slp}) to obtain
$$
\begin{aligned}
\ell_{2}=& \int_{0}^{+\infty} \int_{n}^{+\infty}\left(m^{2}+n^{2}\right)^{-\frac{1}{4}-\frac{Im\nu(z_2)}{2}} e^{-16 \beta t  m n}  d m d n \\
& \leq \int_{0}^{+\infty}\left\|\left(m^{2}+n^{2}\right)^{-\frac{1}{4}-\frac{Im\nu(z_2)}{2}}\right\|_{L^{p}}\left(\int_{n}^{+\infty} e^{- qt  m n}du \right)^{\frac{1}{q}} d n\\
& \leq \int_{0}^{+\infty}n^{\frac{1}{p}-\frac{1}{2}-Im\nu(z_2)}\left(\int_{n}^{+\infty} e^{- qt  m n}dm \right)^{\frac{1}{q}} d n\\
&\leq c t^{-\frac{1}{q} } \int_{0}^{+\infty}n^{\frac{2}{p}-\frac{3}{2}-Im\nu(z_2)} e^{- t  n^2} d n\\
&\leq c t^{-\frac{3}{4}+\frac{Im\nu(z_2)}{2} } \int_{0}^{\infty} \eta^{\frac{2}{p} -\frac{3}{2}-Im\nu(z_2) } e^{-\eta^2 t} d \eta\\
 &\leq c t^{-\frac{3}{4}+\frac{Im\nu(z_2)}{2}}.
\end{aligned}
$$
It is worth noting that $2<p<4$ and $-1<\frac{2}{p} -\frac{3}{2}<-\frac{1}{2}$ guarantee the convergence of generalized integrals. Similarly, in region $\Omega_1$, we can get
\begin{equation}
\begin{aligned}
\left|E^{(1)}\right| &\leq \frac{1}{\pi} \iint_{\Omega_4} \left|E M_{rhp}^{(2)} \bar{\partial} \mathcal{R}^{(1)} M_{rhp}^{(2)}{ }^{-1}\right| d A(s)\\
&\leq \frac{1}{\pi}\left\|E\right\|_{L^{\infty}}\left\|M_{rhp}^{(2)}\right\|_{L^{\infty}}\left\|\left(M_{rhp}^{(2)}\right)^{-1}\right\|_{L^{\infty}} \iint_{\Omega_1}\left|\bar{\partial} R_{1} e^{2 i t \theta}\right| d A(s)\\
&\leq ct^{-\frac{3}{4}}+c t^{-\frac{3}{4}+\frac{Im\nu(z_1)}{2}}.
\end{aligned}
\end{equation}
Combined with the above estimates, the proposition is proved.
\end{proof}

\section{Long time asymptotic behavior of the solution for the nonlocal Hirota equation}
This section mainly combines the above transformations to derive the Long time asymptotic behavior of the solution for the nonlocal Hirota equation. After the transformation in section 5 above, the following formula can be obtained
\begin{equation}\label{mt}
M(z)=E(z) M_{rhp}^{(2)}(z) \mathcal{R}^{(1)}(z)^{-1} \delta(z)^{\sigma_{3}},
\end{equation}

In particular, in the vertical direction $z\in \Omega_2,\Omega_{6}$, there is $\mathcal{R}^{(1)} = I$, so we consider the Laurent series of $M$ in these two regions
$$
M(z)=\left(I+\frac{E^{(1)}}{z}+\ldots\right)\left(I+\frac{{M_{rhp}^{(2)}}^{(1)}}{z}+\ldots\right)\left(I+\frac{{\delta^{(1)}}^{\sigma_{3}}}{z}+\ldots\right).
$$
Using Eqs. (\ref{dz}), (\ref{mrhp}) and (\ref{e1}), we can obtain the asymptotic behavior of $M$
$$
M(z)= I+\left(E^{(1)}+\frac{\tilde{M}_{z_1}^{(1)}}{\sqrt{8\left(6 \beta z_{1}+\alpha\right) t}}+\frac{\hat{M}_{z_2}^{(1)}}{\sqrt{8\left(6 \beta z_{2}+\alpha\right) t}}-\left(i \int_{z_1}^{z_2} \nu(s) \mathrm{d} s\right)^{\sigma_{3}}\right) z^{-1}+\mathcal{O}\left(z^{-2}\right).
$$
Finally, the long-time asymptotic behavior of nonlocal Hirota equation is expressed as
\begin{theorem}
Let $q(x,t)$ be the solution of the nonlocal Hirota equation with initial value belongs to $H^{1,1}(\mathbb{R})$. As $t\rightarrow\infty$, the leading asymptotics of the solution can be expressed as
$$
\begin{aligned}
q(x,t)
&=2\left(iE^{(1)}_{12}+\frac{\beta_{12}(z_1)}{\sqrt{8\left(6 \beta z_{1}+\alpha\right) t}}+\frac{\beta_{12}(z_2)}{\sqrt{8\left(6 \beta z_{2}+\alpha\right) t}}\right)\\
&=\frac{t^{-\frac{1}{2}+Im\nu(z_1)}\sqrt{\pi} e^{\frac{\pi i}{4}} e^{-\frac{\pi \nu(z_1)}{2}}}{\sqrt{\left(6 \beta z_{1}+\alpha\right) }\vartheta(z_1)\Gamma(-i \nu(z_1))}+\frac{t^{-\frac{1}{2}+Im\nu(z_2)}\sqrt{\pi} e^{\frac{\pi i}{4}} e^{-\frac{\pi \nu(z_2)}{2}}}{\sqrt{\left(6 \beta z_{2}+\alpha\right) }\vartheta(z_2)\Gamma(-i \nu(z_2))}+\Xi(t),\\
&\Xi(t)=\left\{\begin{array}{l}
 \mathcal{O}(t^{-\frac{3}{4}+\frac{1}{2}\max \{\operatorname{Im} \nu(z_{1}), \operatorname{Im}\nu(z_{2})\}}), \quad 0<\operatorname{Im}\nu(z_{j})<\frac{1}{2} \\
\mathcal{O}( t^{-\frac{3}{4}}), \quad -\frac{1}{2}<\operatorname{Im}\nu(z_{j})\leq 0 \\
\end{array}\right.\\
&\vartheta(z_j)=r(z_j)\delta^{-2}_{j}(z_j)e^{iRe\nu(z_j)In(48t\beta z_j+8\alpha t)+8it\beta(z-z_j)^3-16it\beta z_j^3-4it\alpha z_j^2},\\
&\nu(z_j)=-\frac{1}{2 \pi} \log \left(1-\kappa r(z_j)\tilde{r}(z_j)\right),~~j=1,2.\\
\end{aligned}
$$
\end{theorem}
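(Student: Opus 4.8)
The plan is to read off the long-time asymptotics directly from the reconstruction formula $u(x,t)=2i\lim_{z\to\infty}(zM)_{12}$ (the quantity the theorem denotes $q$), combined with the factorization (\ref{mt}). First I would restrict attention to the sectors $\Omega_2$ and $\Omega_6$ along the imaginary axis, where $\mathcal{R}^{(1)}(z)=I$, so that on any ray into these regions the chain (\ref{mt}) collapses to $M(z)=E(z)M_{rhp}^{(2)}(z)\delta(z)^{\sigma_3}$. This is the cleanest place to extract the coefficient of $z^{-1}$, since only the three factors whose Laurent expansions are already in hand survive.

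Next I would multiply the three expansions term by term. Using $E(z)=I+E^{(1)}z^{-1}+\mathcal{O}(z^{-2})$ from the integral equation (\ref{m3}), the model expansion (\ref{mrhp}), and $\delta(z)^{\sigma_3}=I-(i\int_{z_1}^{z_2}\nu(s)\,ds)\sigma_3 z^{-1}+\mathcal{O}(z^{-2})$ from (\ref{dz}), the product picks up the sum of the three $z^{-1}$-coefficients at first order. The one subtlety here is the passage between the local variable $k$ and the global variable $z$: from $z=k/\sqrt{8(6\beta z_j+\alpha)t}+z_j$ one has $k\sim\sqrt{8(6\beta z_j+\alpha)t}\,z$ as $z\to\infty$, so each $\tilde{M}^{(1)}_{z_1}/k$ and $\hat{M}^{(1)}_{z_2}/k$ contributes a factor $(8(6\beta z_j+\alpha)t)^{-1/2}$ to the $z^{-1}$ coefficient. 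This reproduces precisely the displayed expansion of $M$ stated just before the theorem.

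Then I would extract the $(1,2)$ entry. The diagonal contribution $(i\int_{z_1}^{z_2}\nu\,ds)\sigma_3$ has no off-diagonal part, so it drops out of the $(1,2)$ slot, leaving $q(x,t)=2i\big(E^{(1)}_{12}+(\tilde{M}^{(1)}_{z_1})_{12}/\sqrt{8(6\beta z_1+\alpha)t}+(\hat{M}^{(1)}_{z_2})_{12}/\sqrt{8(6\beta z_2+\alpha)t}\big)$. Inserting the identities $(\tilde{M}^{(1)}_{z_1})_{12}=-i\beta_{12}(z_1)$ and $(\hat{M}^{(1)}_{z_2})_{12}=-i\beta_{12}(z_2)$ turns the two oscillatory terms into $2\beta_{12}(z_j)/\sqrt{8(6\beta z_j+\alpha)t}$, which is exactly the first line of the claim. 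Substituting the closed form $\beta_{12}(z_j)=t^{\mathrm{Im}\nu(z_j)}\sqrt{2\pi}e^{\pi i/4}e^{-\pi\nu(z_j)/2}/(\vartheta(z_j)\Gamma(-i\nu(z_j)))$ and simplifying $2\sqrt{2\pi}/\sqrt{8(6\beta z_j+\alpha)}=\sqrt{\pi}/\sqrt{6\beta z_j+\alpha}$ together with $t^{\mathrm{Im}\nu(z_j)}/\sqrt{t}=t^{-1/2+\mathrm{Im}\nu(z_j)}$ yields the explicit leading terms. Finally, the residual $2iE^{(1)}_{12}$ is controlled by the bound (\ref{m13}), which is precisely the stated $\Xi(t)$.

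Since every genuinely analytic ingredient is already established upstream --- the $H^{1,1}$ control of the reflection coefficients (Theorem \ref{rh}), the solvability and small-norm estimate for the operator $\mathcal{G}$, the decay bound on $E^{(1)}$, and the parabolic-cylinder solution of the two model problems --- the present step is essentially bookkeeping. The one place demanding care is the consistent tracking of the $k$-to-$z$ rescaling at both stationary points simultaneously, so that the two local models are glued without double counting and the prefactors $\sqrt{8(6\beta z_j+\alpha)t}$ appear with the correct powers; this is where I expect any sign or constant errors to hide, and I would cross-check it against the factor $2$ in the first displayed line to confirm the arithmetic.
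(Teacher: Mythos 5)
Your proposal is correct and takes essentially the same route as the paper: both restrict to the sectors $\Omega_2\cup\Omega_6$ where $\mathcal{R}^{(1)}(z)=I$, multiply the Laurent expansions of $E(z)$, $M_{rhp}^{(2)}(z)$ and $\delta(z)^{\sigma_3}$, extract the $(1,2)$ entry (the diagonal $\delta$-contribution dropping out), and then insert $(\tilde{M}^{(1)}_{z_1})_{12}=-i\beta_{12}(z_1)$, $(\hat{M}^{(1)}_{z_2})_{12}=-i\beta_{12}(z_2)$ together with the estimate on $E^{(1)}$ to produce $\Xi(t)$. Your bookkeeping of the $k$-to-$z$ rescaling factors $\sqrt{8(6\beta z_j+\alpha)t}$ and the simplification $2\sqrt{2\pi}/\sqrt{8(6\beta z_j+\alpha)}=\sqrt{\pi}/\sqrt{6\beta z_j+\alpha}$, with $t^{\operatorname{Im}\nu(z_j)}/\sqrt{t}=t^{-\frac{1}{2}+\operatorname{Im}\nu(z_j)}$, reproduces the paper's computation exactly.
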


\section*{Acknowledgements}
The project is supported by the the National Natural Science Foundation of China (No. 12175069) and Science and Technology Commission of Shanghai Municipality (No. 21JC1402500 and No. 18dz2271000).





\end{document}